\documentclass[reqno]{amsart}
\usepackage[fontsize=12pt]{scrextend}
\usepackage[utf8]{inputenc}
\usepackage{mathptmx}

\usepackage{amsmath}
\usepackage{amsfonts}
\usepackage{amssymb}
\usepackage{amsthm}
\usepackage{yhmath} 
\usepackage{xfrac}
\usepackage{mathrsfs}
\usepackage{hyperref}

\usepackage[left=1.5in, right=1.5in, top=1.25in, bottom=2in]{geometry}

\newtheorem{defi}{Definition}[section]

\theoremstyle{definition}
\newtheorem{theorem}[defi]{Theorem}
\newtheorem*{theorem*}{Theorem}
\newtheorem{lemma}[defi]{Lemma}
\newtheorem*{coro}{Corollary}

\renewcommand{\epsilon}{\varepsilon}
\newcommand{\R}{\mathbb{R}}
\renewcommand{\P}[1]{\mathbb{P}\left[#1\right]}
\newcommand{\Bord}[2]{\text{Bor}^d(#1,#2)}
\newcommand{\Bor}[1]{\text{Bor}^d(#1)}
\renewcommand{\d}[2]{\lVert#1-#2\rVert}
\newcommand{\diam}[1]{\text{diam}\left(#1\right)}
\newcommand{\B}[2]{\wideparen{B}\left(#1,#2\right)}
\newcommand{\norm}[1]{\left\lVert#1\right\rVert}
\newcommand{\area}[1]{\mathscr{A}\!\!\left(#1\right)}
\newcommand{\vol}[1]{\mathcal{V}\!\left(#1\right)}

\newcommand{\pois}[1]{\text{Pois}\left(#1\right)}
\newcommand{\Borone}[2]{\text{Bor}^{1}(#1,#2)}

\title{The chromatic number of random Borsuk graphs}
\author{Matthew Kahle}
\address{Ohio State University}
\email{kahle.70@osu.edu}
\thanks{MK is grateful for partial support from NSF grant DMS \#1352386.}
\author{Francisco Martinez-Figueroa}
\address{Ohio State University}
\email{martinezfigueroa.2@osu.edu}
\date{\today}

\begin{document}
	
	\maketitle\
	
	\begin{abstract}
We study a model of random graph where vertices are $n$ i.i.d. uniform random points on the unit sphere $S^d$ in $\mathbb{R}^{d+1}$, and a pair of vertices is connected if the Euclidean distance between them is at least $2- \epsilon$. We are interested in the chromatic number of this graph as $n$ tends to infinity.

It is not too hard to see that if $\epsilon > 0$ is small and fixed, then the chromatic number is $d+2$ with high probability. We show that this holds even if $\epsilon \to 0$ slowly enough. We quantify the rate at which $\epsilon$ can tend to zero and still have the same chromatic number. The proof depends on combining topological methods (namely the Lyusternik--Schnirelman--Borsuk theorem) with geometric probability arguments. The rate we obtain is best possible, up to a constant factor --- if $\epsilon \to 0$ faster than this, we show that the graph is $(d+1)$-colorable with high probability.
	\end{abstract}
	
\section{Introduction}

Given $\epsilon>0$ and $d \ge 1$, the Borsuk Graph $\text{Bor}^d(\epsilon)$ is the graph with vertex set corresponding to points on the $d-$dimensional unit sphere $S^d\subset\R^{d+1}$ and edges $\{x,y\}$ if and only if $\d{x}{y}>2-\epsilon$, that is, if the two points are $\epsilon$-near to antipodal. Here distance is measured in the ambient Euclidean space $\R^{d+1}$. It is well known that when $\epsilon$ is sufficiently small, its chromatic number is $d+2$, in fact this is equivalent to the Borsuk--Ulam theorem.

The Borsuk graph was part of Lov\'asz's inspiration for his proof of the Kneser conjecture \cite{Lovasz1978}. Among other properties, this graph constitutes a nice example of a graph with large chromatic number and odd girth. See for example \cite{Gabor2011,Prosanov-Raigorodskii-Sagdeev2017,Erdos-Hajnal1966,Erdos-Hajnal1967}. It has also been studied because of its relation with Borsuk's conjecture and distance graphs \cite{Raigorodskii2012, Barg2014,Prosanov2018,Sagdeev18}.

We are interested in the chromatic number of random induced $n$-vertex subgraphs of the Borsuk graph. Our main point is that if $\epsilon \to 0$ slowly enough as $n \to \infty$, then topological lower bounds on chromatic number are tight. This contrasts with the situation studied by Kahle in \cite{Kahle2007}, where topological lower bounds are not efficient for the chromatic number of Erd\H{o}s--R\'enyi random graphs. Similar problems have also been studied for random Kneser graphs in \cite{Kupavskii2018} and \cite{Kiselev-Kupavskii2018}.

The rest of the paper is organized as follows. We finish this section with some definitions and notation. In section \ref{section_2} we prove Theorem \ref{thm_eps_constant} when $\epsilon$ is fixed. 

\begin{theorem}\label{thm_eps_constant}
	Let $d\geq 1$ and $0<\epsilon<2-\lambda_d$ be fixed. Then a.a.s. $$\chi\left(\Bord{\epsilon}{n}\right)=d+2.$$
\end{theorem}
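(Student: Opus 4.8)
The plan is to establish the two inequalities $\chi(\Bord{\epsilon}{n})\le d+2$ and $\chi(\Bord{\epsilon}{n})\ge d+2$ separately; only the latter uses the randomness. For the upper bound, which in fact holds for every realization: since $\epsilon<2-\lambda_d$ we have $2-\epsilon>\lambda_d$, so by the definition of $\lambda_d$ there is a partition $S^d=P_1\cup\dots\cup P_{d+2}$ in which every part has diameter $<2-\epsilon$. Colour each vertex of $\Bord{\epsilon}{n}$ by the index of the part containing it. Two vertices of the same colour lie in a common $P_i$, hence are at distance $<2-\epsilon$, hence are non-adjacent, so this is a proper $(d+2)$-colouring and $\chi(\Bord{\epsilon}{n})\le d+2$ deterministically.

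For the lower bound the topological input is the Lyusternik--Schnirelman--Borsuk theorem: $S^d$ cannot be covered by $d+1$ open sets, none of which contains a pair of antipodal points. Set $r:=\epsilon/4$. First I would check that a.a.s.\ the $n$ sample points form an $r$-net of $S^d$, i.e.\ every point of $S^d$ lies within Euclidean distance $r$ of some vertex. This is a routine first-moment estimate: fix a finite $(r/2)$-net $\{c_1,\dots,c_N\}$ of $S^d$ with $N\le C_d\, r^{-d}$, and note that if every cap $\B{c_j}{r/2}$ contains a sample point then the sample is an $r$-net. Hence, writing $\mu$ for the uniform probability measure and using $\mu(\B{c}{r/2})\ge c_d r^d$,
\[
\P{\text{the sample is not an }r\text{-net}} \;\le\; \sum_{j=1}^{N}\bigl(1-\mu(\B{c_j}{r/2})\bigr)^{n} \;\le\; C_d\, r^{-d}\,(1-c_d r^{d})^{n},
\]
which tends to $0$ as $n\to\infty$ since $r$ is fixed.

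Now condition on the event that the sample is an $r$-net and suppose, for contradiction, that $\Bord{\epsilon}{n}$ has a proper colouring with colour classes $V_1,\dots,V_{d+1}$. For each $i$ form the \emph{open} set $A_i:=\bigcup_{v\in V_i}\B{v}{r}$. The net property gives $A_1\cup\dots\cup A_{d+1}=S^d$. If $x\in\B{u}{r}$ and $y\in\B{w}{r}$ with $u,w\in V_i$, then $u$ and $w$ are non-adjacent, so $\d{u}{w}\le 2-\epsilon$ and therefore $\d{x}{y}\le\d{x}{u}+\d{u}{w}+\d{w}{y}<r+(2-\epsilon)+r=2-\tfrac{\epsilon}{2}$. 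Thus $\diam{A_i}\le 2-\tfrac{\epsilon}{2}<2$, so no $A_i$ contains an antipodal pair, contradicting the Lyusternik--Schnirelman--Borsuk theorem. Hence a.a.s.\ $\chi(\Bord{\epsilon}{n})\ge d+2$, and combined with the upper bound this gives the theorem.

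I do not expect a genuine obstacle in this fixed-$\epsilon$ regime: the only points needing care are the explicit net probability bound and the observation that it is the passage to the open thickenings $A_i$ that allows the Lyusternik--Schnirelman--Borsuk theorem to be applied without any measurability hypothesis on the colouring. The real work of the paper is to run the same scheme when $\epsilon=\epsilon(n)\to 0$: there $r$ must be taken proportional to $\epsilon$, and the bound $r^{-d}(1-c_d r^{d})^{n}$ tends to $0$ only if $\epsilon$ does not decay too quickly, which is exactly where the quantitative threshold comes from.
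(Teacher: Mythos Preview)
Your argument is correct for fixed $\epsilon$, and the overall architecture---deterministic $(d+2)$-colouring for the upper bound, a net/covering event plus the Lyusternik--Schnirelman--Borsuk theorem for the lower bound---is the same as the paper's. The one substantive difference is the radius: you take $r=\epsilon/4$ and bound $\diam{A_i}\le 2r+(2-\epsilon)<2$ by the triangle inequality, whereas the paper takes caps of radius $\sqrt{\epsilon}/2$ and, after LSB produces an antipodal pair, uses Lemma~\ref{lemma_min_distance} (the identity $\norm{x+y}^2+\norm{x-y}^2=4$) to show that the two nearby vertices are actually adjacent. Your route avoids Lemma~\ref{lemma_min_distance} and is slightly more elementary.

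Where this difference becomes important is exactly in your closing paragraph. You write that for $\epsilon\to 0$ ``$r$ must be taken proportional to $\epsilon$'', but the paper takes $r$ proportional to $\sqrt{\epsilon}$, and this is not cosmetic. With $r\sim\epsilon$ the first-moment bound is of order $\epsilon^{-d}(1-c\,\epsilon^{d})^{n}$, which forces $\epsilon\gg(\log n/n)^{1/d}$; with $r\sim\sqrt{\epsilon}$ it is of order $\epsilon^{-d/2}(1-c\,\epsilon^{d/2})^{n}$, which gives the correct threshold $\epsilon\gg(\log n/n)^{2/d}$ of Theorem~\ref{thm_lower_bound}. So the $\sqrt{\epsilon}$ radius---equivalently, the use of Lemma~\ref{lemma_min_distance} rather than a raw triangle inequality---is precisely what produces the exponent $2/d$.

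One small inconsistency: you call the $A_i$ ``open'' but write them as unions of the paper's closed caps $\B{v}{r}$. The LSB theorem is stated for either open or closed covers, so just pick one and keep the net definition and the caps consistent so that the covering $A_1\cup\dots\cup A_{d+1}=S^d$ is literally true.
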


In section \ref{section_3} we prove Theorem \ref{thm_lower_bound}, stating that the chromatic number is still the same when $\epsilon\to0$ slowly.

\begin{theorem}\label{thm_lower_bound}
	Let $\epsilon(n)=C\left(\dfrac{\log{n}}{n}\right)^{2/d}$, where $$C\geq \dfrac{64}{3} \left( \dfrac{3\pi^2}{4} \right)^{1/d}.$$ Then a.a.s. $\chi\left(\Bord{\epsilon(n)}{n}\right)=d+2.$
\end{theorem}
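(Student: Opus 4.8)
The plan is to establish $\chi\le d+2$ and $\chi\ge d+2$ separately; the upper bound is an easy deterministic fact. Fix once and for all the $d+2$ closed Voronoi cells on $S^d$ of a set of $d+2$ equidistant points (the vertices of a regular inscribed simplex); each cell is compact and contains no pair of antipodal points, hence has Euclidean diameter strictly below $2$. In the notation of Theorem~\ref{thm_eps_constant}, writing $\lambda_d$ for the largest of these diameters we have $\lambda_d<2$, and since $\epsilon(n)\to 0$, for all large $n$ one has $\epsilon(n)<2-\lambda_d$; colouring each sample point by the index of a cell containing it is then a proper $(d+2)$-colouring (an edge cannot have both endpoints in one cell). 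So $\chi\bigl(\Bord{\epsilon(n)}{n}\bigr)\le d+2$ deterministically for $n$ large.

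For the lower bound I would show a.a.s.\ $\Bord{\epsilon(n)}{n}$ is not $(d+1)$-colourable, combining a covering estimate with the Lyusternik--Schnirelman--Borsuk theorem (LSB). Set $\eta=\eta(n)$ by $\eta(n)^2=\epsilon(n)/2$, so $\eta(n)=\sqrt{C/2}\,(\log n/n)^{1/d}$. The probabilistic core is the claim that, for $C$ at least the value in the statement, a.a.s.\ the points $X_1,\dots,X_n$ form an $\eta$-net of $S^d$ in the Euclidean metric: every point of $S^d$ is within distance $\eta$ of some $X_i$. Granting this, suppose $\Bord{\epsilon(n)}{n}$ were properly $(d+1)$-coloured with independent colour classes $V_1,\dots,V_{d+1}$, and thicken each to the closed set $\widetilde V_i:=\{\,y\in S^d:\ \norm{y-v}\le\eta\ \text{for some}\ v\in V_i\,\}$, a finite union of spherical caps. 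By the net property these $d+1$ closed sets cover $S^d$, so by LSB some $\widetilde V_i$ contains an antipodal pair $y,-y$ (if some class is empty, apply LSB to the $d+1$ closed sets including that empty one; the conclusion is the same). Pick $v,w\in V_i$ with $\norm{v-y}\le\eta$ and $\norm{w+y}\le\eta$; then $\norm{v+w}=\norm{(v-y)+(y+w)}\le 2\eta$, so, $v$ and $w$ being unit vectors, the parallelogram law gives (for $n$ large enough that $\eta<1$)
\[
\norm{v-w}\ =\ \sqrt{\,4-\norm{v+w}^2\,}\ \ge\ 2\sqrt{1-\eta^2}\ >\ 2\bigl(1-\eta^2\bigr)\ =\ 2-\epsilon(n),
\]
the strict step being $\sqrt{1-t}>1-t$ for $t\in(0,1)$. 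Moreover $v\ne w$, since $v=w$ would force $2=\norm{y-(-y)}\le\norm{y-v}+\norm{v+y}\le 2\eta<2$. Hence $\{v,w\}$ is an edge of $\Bord{\epsilon(n)}{n}$ with both endpoints in $V_i$, contradicting that $V_i$ is independent. So a.a.s.\ $\chi\ge d+2$, and with the upper bound $\chi\bigl(\Bord{\epsilon(n)}{n}\bigr)=d+2$ a.a.s.

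I expect the Euclidean $\eta$-net claim to be the only real obstacle, and would prove it by the usual net/union-bound scheme. First, lower bound the normalised area of a spherical cap of Euclidean radius $r$ by an explicit $\kappa_d r^d$ valid for small $r$: writing $\norm{x-y}=2\sin(\theta/2)$ for the chord of angular distance $\theta$, this amounts to estimating $\int_0^{\theta(r)}\sin^{d-1}t\,dt$ with $\theta(r)=2\arcsin(r/2)$, together with elementary bounds on $\sin$ and $\arcsin$ near $0$ and the surface-area normalisation --- this is where the convenient (non-optimal) constants $\tfrac{3\pi^2}{4}$ and $\tfrac{64}{3}$ ultimately come from. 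Second, fix a deterministic $\tfrac\eta2$-net $\mathcal N$ of $S^d$ with $\lvert\mathcal N\rvert$ bounded by a packing argument, and note that if every point of $\mathcal N$ has some $X_i$ within $\tfrac\eta2$ then the $X_i$ form an $\eta$-net. Third, apply a union bound,
\[
\P{\,\exists\,p\in\mathcal N\ \text{with no}\ X_i\ \text{within}\ \tfrac\eta2\,}\ \le\ \lvert\mathcal N\rvert\bigl(1-\mu(\eta/2)\bigr)^n\ \le\ \lvert\mathcal N\rvert\,e^{-n\,\mu(\eta/2)},
\]
and observe that $C\ge\frac{64}{3}\bigl(\frac{3\pi^2}{4}\bigr)^{1/d}$ is exactly what makes $n\,\mu(\eta/2)$ beat $\log\lvert\mathcal N\rvert=\Theta(\log n)$ by an unbounded margin, so the bound tends to $0$; everything else is routine bookkeeping of constants. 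Finally, note that it is the relation $\eta\asymp\sqrt{\epsilon}$ forced by the parallelogram step --- rather than the weaker $\eta\asymp\epsilon$ coming from $\norm{v-w}\ge 2-\norm{v+w}$ --- that produces the exponent $2/d$, matching, up to the constant $C$, the rate that the abstract asserts to be best possible.
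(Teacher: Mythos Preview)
Your proposal is correct and follows essentially the same approach as the paper: the upper bound via a $(d+2)$-piece partition of $S^d$ of diameter $\lambda_d<2$, and the lower bound via LSB applied to thickened colour classes once a union bound over a deterministic $\delta$-net shows the sample is a.a.s.\ an $\eta$-net with $\eta\asymp\sqrt{\epsilon}$. The only differences are cosmetic---you use Voronoi cells of the simplex vertices in place of the paper's radially projected facets, and you take $\eta=\sqrt{\epsilon/2}$ with the parallelogram law directly rather than the paper's $\sqrt{\epsilon}/2$ via Lemma~\ref{lemma_min_distance}; with your slightly larger caps the stated constant $C$ is more than enough, so the word ``exactly'' is a mild overstatement but nothing is wrong.
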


Finally in section \ref{section_4} we prove Theorem \ref{thm_upper_bound}, showing that this rate is tight, up to a constant, in the sense that if $\epsilon\to 0$ faster, then the random Borsuk graph is $(d+1)$-colorable, a.a.s.

\begin{theorem}\label{thm_upper_bound}
Let $\epsilon(n) = C (\log n / n)^{2/d}$,  where $$C< \frac{3(4-\lambda_d^2)}{64}\sqrt[d]{\frac{9}{4 d^2}}.$$
Then a.a.s. $\chi\left(\Bord{\epsilon(n)}{n}\right)\leq d+1$.
\end{theorem}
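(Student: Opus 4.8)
The plan is to exhibit, for every realisation of the $n$ random points, an explicit map $c\colon V\to\{1,\dots,d+1\}$ and to prove that it is a proper colouring of $\Bord{\epsilon(n)}{n}$ with high probability. Throughout write $\gamma=\gamma(\epsilon)=2\arccos(1-\epsilon/2)=(2+o(1))\sqrt{\epsilon}$, so that $x,y$ are adjacent exactly when the angle between $-x$ and $y$ is less than $\gamma$ (equivalently $\d{x}{y}>2-\epsilon$).

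\emph{Geometric skeleton.} Fix unit vectors $v_1,\dots,v_{d+1}\in\R^{d+1}$ with $\sum_i v_i=0$ and $\langle v_i,v_j\rangle=-1/d$ for $i\neq j$ (the vertices of a regular simplex), and let $N$ be a unit vector orthogonal to all of them. Set $\rho=\pi/2-\beta$, where $\beta=\beta(\epsilon)$ is a fixed multiple of $\arccos(1-\epsilon/2)$ with $\beta\ge\gamma$ — the precise multiple is what pins down the admissible range of $C$. Then $\rho<\arcsin(1-\epsilon/2)$, so each cap $C_i=\{x:\angle(x,v_i)\le\rho\}$ has $\diam{C_i}=2\sin\rho<2-\epsilon$ and therefore contains no edge. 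Writing $w_x$ for the component of $x$ orthogonal to $N$, one has $x\notin\bigcup_i C_i$ precisely when $\langle w_x,v_i\rangle<\sin\beta$ for every $i$; since $\max_i\langle w_x,v_i\rangle\ge\lVert w_x\rVert/d$, this set is a pair of simplex-shaped neighbourhoods $B^+$ and $B^-=-B^+$ of $\pm N$ of angular radius $\Theta(\sqrt{\epsilon})$ (reaching out distance $d\sin\beta$ in the direction of each $-v_j$ and thin in the direction of each $v_j$). Thus $C_1,\dots,C_{d+1}$ cover all of $S^d$ except $B^+\cup B^-$.

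\emph{The colouring, and properness.} For $p\in V$ lying in some cap, let $c(p)$ be the least $i$ with $p\in C_i$; for $p\in V\cap(B^+\cup B^-)$, let $c(p)$ be the index $i$ maximising $\langle w_p,v_i\rangle$ (ties broken by least index). We rule out monochromatic edges case by case. Equal-coloured cap vertices lie in a common cap $C_i$, which contains no edge. If $p$ is a polar vertex with $c(p)=i^\ast$, then using $\beta\ge\gamma$ one checks that $C_{i^\ast}$ is disjoint from the angular $\gamma$-ball about $-p$ — the key being that $i^\ast$ is the direction in which $w_p$ is \emph{most positive}, so $-p$ points away from $v_{i^\ast}$ — hence $p$ has no neighbour of colour $i^\ast$ among the cap vertices. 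It remains to see that two \emph{adjacent} polar vertices $p,q$ get different colours: here $-q$ lies within angle $\gamma$ of $p$, so $w_q=-w_p+O(\sqrt{\epsilon})$, and the index maximising $\langle -w_p,v_i\rangle$ (i.e. minimising $\langle w_p,v_i\rangle$) is distinct from the one maximising $\langle w_p,v_i\rangle$ whenever $w_p\neq 0$; the only danger is the $O(\sqrt{\epsilon})$ error term.

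\emph{Exceptional vertices — the main obstacle.} The last step fails only for \emph{exceptional} polar vertices: those $p$ for which $w_p$ is very short (within $O(\sqrt{\epsilon})$ of a pole) or for which $\langle w_p,\cdot\rangle$ nearly ties its maximum over $\{v_1,\dots,v_{d+1}\}$ (so $p$ is near a seam of the cap arrangement). These fill a region of normalised volume $\Theta(\epsilon^{d/2})$, so in expectation there are only $\Theta(C^{d/2}\log n)$ of them; near a pole one moreover checks that the $\gamma$-balls about their antipodes avoid every cap, so such a vertex has all $d+1$ colours available, and the Borsuk subgraph they induce is bipartite with sides $B^+,B^-$. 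One then shows that with high probability these $\Theta(\log n)$ vertices admit a recolouring inside $\{1,\dots,d+1\}$: this is a first-moment estimate in which the expected number of obstructing configurations factors into a cap-volume term (responsible for the $\sqrt[d]{9/(4d^2)}$) and a combinatorial term coming from the number of caps together with the factor-$d$ blow-up of the polar regions (responsible for $\tfrac{3(4-\lambda_d^2)}{64}$), and the hypothesis $C<\tfrac{3(4-\lambda_d^2)}{64}\sqrt[d]{9/(4d^2)}$ is exactly the threshold at which this expectation is $o(1)$. I expect this recolouring step, carried out with the optimal constant, to be the crux; everything preceding it is deterministic spherical geometry.
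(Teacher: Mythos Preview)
Your proposal contains a genuine gap, and you acknowledge it yourself: the recolouring of the ``exceptional'' polar vertices is declared to be ``the crux'' and then not carried out. Everything hinges on that step, and what you write about it is not a proof but a list of hopes --- that the exceptional set has volume $\Theta(\epsilon^{d/2})$, that the resulting bipartite subgraph can be repaired with colours from $\{1,\dots,d+1\}$, and that some unspecified first-moment count has threshold exactly $\tfrac{3(4-\lambda_d^2)}{64}\sqrt[d]{9/(4d^2)}$. No calculation supports the last claim; your attribution of the two factors of the constant to ``cap-volume'' and ``combinatorics plus factor-$d$ blow-up'' reads as reverse-engineering the statement rather than deriving it. In fact the constant in the paper arises from a completely different construction (see below), in which $\lambda_{d-1}$ enters as the diameter of a radially projected simplex facet on $S^{d-1}$; there is no reason to expect your simplex-of-caps picture to reproduce that number.

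There is also a quantitative worry in your ``non-exceptional polar vs.\ polar'' case. With $\beta$ on the order of $\gamma\asymp\sqrt{\epsilon}$, every point of $B^{+}$ has $\lVert w_p\rVert\lesssim d\sqrt{\epsilon}$, so the gap $\max_i\langle w_p,v_i\rangle-\min_i\langle w_p,v_i\rangle$ is itself only $O(\sqrt{\epsilon})$; the $O(\sqrt{\epsilon})$ error from $w_q=-w_p+O(\sqrt{\epsilon})$ is then of the same order, and a constant fraction of $B^{+}$ (not a lower-dimensional seam) may be exceptional. This makes the promised first-moment repair much more delicate than you suggest.

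For comparison, the paper's argument is both different and short. One proves a deterministic lemma: if a single spherical cap $A=\B{x}{r}$ with $r\asymp\sqrt{\epsilon}$ is removed, then $\Bor{\epsilon}\setminus A$ is $(d+1)$-colourable, by projecting along great circles to $\partial A\cong S^{d-1}$ and pulling back a $(d+1)$-colouring of that lower sphere. The probabilistic part is then just to show that a.a.s.\ at least one of $N\asymp\epsilon^{-d/2}$ disjoint caps of that radius contains no sample point; Poissonisation makes the cap-counts independent, and the condition on $C$ is exactly what forces the product $\prod_i\mathbb{P}[\eta(F_i)>0]$ to tend to $0$. No recolouring or exceptional-set analysis is needed.
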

	
\begin{defi}[Random Borsuk graph]
	Given $n \ge 1$, $d\geq 1$, and $\epsilon>0$, we define a \textbf{random Borsuk graph}, $\Bord{\epsilon}{n}$, as follows.
		
\begin{itemize}
		\item Its vertices are $X_1, X_2,\cdots,X_n$, $n$ independent and identically distributed uniform random variables over the $d$-dimensional sphere $S^d\subset \R^{d+1}$ of radius 1. 
		\item $X_i$ and $X_j$ for $i\neq j$ are connected by an edge, if and only if $\d{X_i}{X_j}>2-\epsilon$, where $\lVert\cdot\rVert$ is the Euclidean distance.
	\end{itemize}
		
\end{defi}

Throughout this paper we will think of random Borsuk graphs on $S^d$ for a fixed dimension $d$. However we will explicitly point out the constants that depend on $d$ in the statement of the results. We will denote the closed ball with center $x$ and radius $r$ by $B(x,r)=\left\{y\in\R^{d+1}: \norm{x-y}\leq r \right\}$. Similarly, we denote intersections of closed balls with the $d$-sphere by $\B{x}{r}$, and we call them spherical caps, so $$\B{x}{r}:=B(x,r)\cap S^d=\left\{y\in S^{d}: \norm{x-y}\leq r\right\}$$		
Given a Borel set $F\subset \R^{d+1}$, we denote its volume, i.e. Lebesgue measure, as $\vol{F}$. Similarly, for a Borel set $F\subset S^d$, we denote its area on the surface of the sphere, by $\area{F}$. Also, we denote $\omega_d=\vol{S^d}$ and $\alpha_d=\area{S^d}$. Given a graph $G$, we denote its chromatic number by $\chi(G)$. 
				
We say that an event happens \emph{asymptotically almost surely (a.a.s)} if the probability approaches $1$ as $n \to \infty$.
	
\section{Random Borsuk Graph with $\epsilon$ constant}\label{section_2}

We start by proving that when $\epsilon>0$ is constant and small, $\chi(\Bord{\epsilon}{n})=d+2$ a.a.s.

\begin{lemma}\label{lemma_min_distance}
	For $x,y\in S^d$, $\norm{x-y}>2-\epsilon$ if and only if $\norm{x+y}<2\sqrt{\epsilon-\frac{\epsilon^2}{4}}$
\end{lemma}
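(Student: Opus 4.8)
The plan is to reduce the stated equivalence to the parallelogram law. First I would record the basic identities coming from $x,y$ being unit vectors: expanding inner products, $\norm{x-y}^2 = \norm{x}^2 - 2\langle x,y\rangle + \norm{y}^2 = 2 - 2\langle x,y\rangle$ and likewise $\norm{x+y}^2 = 2 + 2\langle x,y\rangle$. Adding these gives $\norm{x-y}^2 + \norm{x+y}^2 = 4$, hence $\norm{x+y}^2 = 4 - \norm{x-y}^2$. This single identity is the whole content of the lemma; everything else is bookkeeping about signs.

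Next I would use it to pass between the two inequalities. Since we work in the regime $0<\epsilon<2$ (so that the Borsuk condition is nontrivial), $2-\epsilon>0$, and both sides of $\norm{x-y}>2-\epsilon$ are nonnegative; therefore the inequality is equivalent to its square $\norm{x-y}^2 > (2-\epsilon)^2 = 4 - 4\epsilon + \epsilon^2$. Substituting $\norm{x-y}^2 = 4 - \norm{x+y}^2$, this becomes $4 - \norm{x+y}^2 > 4 - 4\epsilon + \epsilon^2$, i.e. $\norm{x+y}^2 < 4\epsilon - \epsilon^2 = 4\!\left(\epsilon - \tfrac{\epsilon^2}{4}\right)$. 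The right-hand side is nonnegative for $0<\epsilon<4$, so taking square roots (again a reversible step between nonnegative quantities) yields $\norm{x+y} < 2\sqrt{\epsilon - \tfrac{\epsilon^2}{4}}$. Reading the chain backwards gives the converse implication, so the two conditions are equivalent.

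There is essentially no obstacle here: the only things to be careful about are that the quantity $\epsilon - \epsilon^2/4$ under the square root is nonnegative in the relevant range, and that squaring and taking square roots preserve the inequalities because all quantities involved ($\norm{x-y}$, $\norm{x+y}$, $2-\epsilon$) are nonnegative. I would state the lemma, give the identity $\norm{x+y}^2 = 4 - \norm{x-y}^2$, and then present the two-line equivalence of inequalities.
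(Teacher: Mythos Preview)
Your proof is correct and follows essentially the same approach as the paper: both arguments reduce the equivalence to the identity $\norm{x+y}^2 = 4 - \norm{x-y}^2$, from which the inequality manipulation is immediate. The only cosmetic difference is that the paper derives this identity via Thales' theorem (the segment $\overline{(-x)x}$ is a diameter, so $\overline{(-x)y}\perp\overline{xy}$), whereas you obtain it algebraically by expanding inner products; your version is arguably cleaner and more self-contained.
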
	
\begin{proof}
	Since $\overline{(-x)x}$ is a diameter, $\overline{(-x)y}\perp \overline{xy}$. Thus $\norm{x+y}^2=4-\norm{x-y}^2$, so the claim follows.
\end{proof}

Before getting into the analysis of the chromatic number, let us point out the fact that the odd girth of the Borsuk graph is $> 1/\sqrt{\epsilon}$. While this has been observed before (see \cite{Erdos-Hajnal1966,Erdos-Hajnal1967,Gabor2011}), we include a proof for completeness.

\begin{lemma}\label{lemma_odd_girth}    
    Let $\epsilon>0$ and $x_0\in S^d$. If $x_0y_1x_1y_2\cdots x_ny_{n+1}=x_0$ is an odd cycle in the Borsuk graph $\Bor{\epsilon}$, then $2n+1\geq 1/\sqrt{\epsilon}$. In other words, all odd cycles in $\Bor{\epsilon}$ have length greater than $1/\sqrt{\epsilon}$. 
\end{lemma}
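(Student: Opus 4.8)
The plan is to reduce the statement to a purely metric fact about the antipodal map on $S^d$, using the following idea: if $x$ and $y$ are adjacent in $\Bor{\epsilon}$, then $y$ is $\epsilon$-close to $-x$, so walking along an edge of the Borsuk graph moves us a small distance away from the antipode of our current vertex. First I would quantify this. By Lemma \ref{lemma_min_distance}, an edge $\{x,y\}$ forces $\norm{x+y}<2\sqrt{\epsilon-\epsilon^2/4}\leq 2\sqrt{\epsilon}$, i.e. $\norm{(-x)-y}<2\sqrt\epsilon$. It will be cleaner to work with the geodesic (angular) metric $\rho$ on $S^d$ rather than the chord metric; since a chord of length $t$ subtends an angle $2\arcsin(t/2)\leq \pi t/2$ for $t\in[0,2]$, the bound $\norm{(-x)-y}<2\sqrt\epsilon$ gives $\rho(-x,y)<\pi\sqrt\epsilon$, hence $\rho(x,y)>\pi-\pi\sqrt\epsilon$. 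So every edge of the Borsuk graph is a geodesic-near-diameter: its endpoints are at angular distance more than $\pi(1-\sqrt\epsilon)$.

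Next I would run a telescoping/triangle-inequality argument along the odd cycle $x_0 y_1 x_1 y_2 \cdots x_n y_{n+1} = x_0$. The cycle has $2n+1$ edges. Consider the geodesic distances between consecutive-but-one vertices of the same ``color class'' of the bipartition-that-almost-works: since $\rho(x_i, y_{i+1})$ and $\rho(y_{i+1}, x_{i+1})$ are both more than $\pi - \pi\sqrt\epsilon$ while $\rho$ is bounded above by $\pi$, the triangle inequality applied on the sphere (where $\rho(a,c)\geq \rho(a,-b) - \rho(-b,c)$ type estimates, equivalently $\rho(a,b)+\rho(b,c)\geq \pi - |\pi - \rho(a,c)|$ manipulations) forces $\rho(x_i, x_{i+1}) < 2\pi\sqrt\epsilon$: moving two edges brings you back near where you started, within $2\pi\sqrt\epsilon$. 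Chaining this around the closed walk $x_0 \to x_1 \to \cdots$, and accounting for the single ``parity defect'' edge that makes the cycle odd, one gets that $\pi$ (the distance $\rho(x_0,-x_0)$ that the odd cycle is effectively forced to traverse, since an odd closed walk of near-diameters cannot close up unless it spans a full antipodal distance) is at most $(2n+1)$ times the per-edge slack $\pi\sqrt\epsilon$, i.e. $\pi \leq (2n+1)\pi\sqrt\epsilon$, which rearranges to $2n+1\geq 1/\sqrt\epsilon$.

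I would make the ``parity'' step precise as follows, which is really the crux: define $f:S^d\to S^d$ by $f=\mathrm{id}$, and track the quantity $\rho(x_0, z_k)$ where $z_k$ is the $k$-th vertex of the cycle, after composing with an alternating sign. Equivalently, lift to the double cover or simply observe: assign to the cycle the product of the "reflections through near-antipodes"; after an even number of edges we return near the identity, after an odd number we return near the antipodal map, which has displacement $\pi$ from the identity everywhere. Since each edge contributes displacement error at most $\pi\sqrt\epsilon$ (the failure of $y_{i+1}$ to be exactly $-x_i$), the accumulated error after $2n+1$ edges is at most $(2n+1)\pi\sqrt\epsilon$, and this must be at least $\pi$ for the closed odd walk to be consistent. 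Hence $2n+1 \geq 1/\sqrt\epsilon$.

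The main obstacle I expect is formalizing this "parity/displacement accounting" cleanly without invoking heavier machinery (degree theory, the fundamental group of $\mathbb{RP}^d$); the honest elementary route is the explicit spherical triangle inequality bookkeeping sketched above, where one must be careful that $\rho$-triangle inequalities near the antipodal locus behave as claimed (they do, since $\rho\le\pi$ and $\rho(a,-b)=\pi-\rho(a,b)$). A secondary, purely cosmetic point is the chord-to-angle conversion constant: using $2\arcsin(t/2)\le \pi t/2$ is what produces the clean $1/\sqrt\epsilon$; a cruder bound would only cost a constant and still give "odd girth $\gtrsim 1/\sqrt\epsilon$", which is all that is claimed.
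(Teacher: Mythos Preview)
Your approach is correct and essentially the same as the paper's: both use Lemma~\ref{lemma_min_distance} to show that two consecutive Borsuk edges move you only $O(\sqrt\epsilon)$ (the paper gets $\norm{x_i-x_{i-1}}<4\sqrt\epsilon$ directly, you get $\rho(x_i,x_{i-1})<2\pi\sqrt\epsilon$ after converting to angles), telescope over the $n$ pairs, and then use the single leftover edge $x_n\,y_{n+1}=x_n\,x_0$ to close up. The paper stays in the Euclidean metric throughout, which spares the chord-to-angle conversion and dissolves the ``parity accounting'' you worry about into a one-line triangle inequality, $2=\norm{x_0+x_0}\le \norm{x_0-x_n}+\norm{x_n+x_0}<4n\sqrt\epsilon+2\sqrt\epsilon$; no double covers or reflection products are needed.
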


\begin{proof}
    Since $\norm{x_i-y_i},\norm{y_i-x_{i-1}}>2-\epsilon$, for any $i$, by applying Lemma \ref{lemma_min_distance}, we get
    
    \begin{align*}
    \norm{x_i-x_{i-1}} &\leq \norm{x_i+y_i}+\norm{-y_i-x_{i-1}} \\
    & =\norm{x_i+y_i} +\norm{y_i+x_{i-1}} \\
    & \leq 4 \sqrt{\epsilon - {\epsilon^2}/{4}} \\
    & <4\sqrt{\epsilon}.
    \end{align*}
    
    Thus $$\norm{x_n-x_0}\leq\norm{x_n-x_{n-1}}+\norm{x_{n-1}-x_{n-2}}+\cdots+
    \norm{x_1-x_0}\leq 4n\sqrt{\epsilon}.$$
    
    Finally, $$2=\norm{2x_0}=\norm{x_0+y_{n+1}}\leq \norm{x_0-x_n}+\norm{x_n+y_{n+1}}<2(2n+1)\sqrt{\epsilon}.$$
    
    Therefore $$\frac{1}{\sqrt{\epsilon}}<2n+1.$$
    
\end{proof}

\begin{lemma}\label{lemma_color_d+2}
	For each $d\geq 1$, there exist a constant $\lambda_d<2$ such that, whenever $0<r<2-\lambda_d$, the Borsuk graph $\text{Bor}^d(r)$ has a proper coloring with $d+2$ colors. 
\end{lemma}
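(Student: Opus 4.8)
The plan is to produce an explicit proper $(d+2)$-coloring of $\text{Bor}^d(r)$ from a fixed partition of $S^d$ into $d+2$ pieces, each of Euclidean diameter strictly less than $2$; the constant $\lambda_d$ will be (an upper bound for) the largest of these diameters, and in particular it will not depend on $r$.

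Concretely, I would fix a regular $(d+1)$-simplex inscribed in $S^d$, with vertices $w_1,\dots,w_{d+2}$, normalized so that $\sum_{j=1}^{d+2}w_j=0$ (equivalently $\langle w_i,w_j\rangle=-\tfrac1{d+1}$ for $i\neq j$). Define $c\colon S^d\to\{1,\dots,d+2\}$ by letting $c(x)$ be the least index $i$ achieving $\min_j\langle x,w_j\rangle$, so that $c$ records, with ties broken by index, a simplex vertex lying farthest from $x$. For each $i$ set
\[
  U_i:=\left\{x\in S^d:\langle x,w_i\rangle=\min_j\langle x,w_j\rangle\right\},
\]
a closed (hence compact) subset of $S^d$ containing the color class $c^{-1}(i)$; by symmetry the sets $U_1,\dots,U_{d+2}$ are mutually congruent.

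The crux is the claim that $\diam{U_i}<2$, i.e.\ that $U_i$ contains no antipodal pair. If $x\in U_i$ then $\langle x,w_i\rangle\le\langle x,w_j\rangle$ for every $j$; summing over all $d+2$ indices and using $\sum_j w_j=0$ yields $(d+2)\langle x,w_i\rangle\le\langle x,\sum_j w_j\rangle=0$, so $\langle x,w_i\rangle\le0$. Were this an equality we would get $\langle x,w_j\rangle=0$ for all $j$, which is impossible since the $w_j$ span $\R^{d+1}$ and $x\neq0$. Hence $\langle x,w_i\rangle<0$, so $U_i$ lies in the open hemisphere $\{\,y\in S^d:\langle y,w_i\rangle<0\,\}$ and contains no pair of antipodes; being compact it attains its diameter, which is therefore strictly less than $2$. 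Put $\lambda_d:=\max_i\diam{U_i}<2$.

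Finally, suppose $0<r<2-\lambda_d$ and let $x,y$ be adjacent in $\text{Bor}^d(r)$, so that $\d{x}{y}>2-r>\lambda_d\ge\diam{U_i}$ for each $i$. Then $x$ and $y$ cannot both lie in a single $U_i$, whence $c(x)\neq c(y)$; thus $c$ is a proper $(d+2)$-coloring and the lemma follows. The only nontrivial point is the hemisphere containment giving $\diam{U_i}<2$. If one wants an explicit constant, a little more work identifies the corners of the spherical polytope $U_i$ as exactly $\{w_j:j\neq i\}$ and gives $\lambda_d=\sqrt{2(d+2)/(d+1)}$, so that $4-\lambda_d^2=2d/(d+1)$, consistent with the constants appearing in the later theorems.
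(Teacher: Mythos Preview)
Your proof is correct and ultimately produces the very same partition of $S^d$ that the paper does: the paper radially projects each facet of the inscribed regular simplex $\Delta$ onto the sphere via $\Phi(x)=x/\norm{x}$, and your region $U_i$ is exactly the image under $\Phi$ of the facet opposite the vertex $w_i$. What differs is the argument that each piece has diameter strictly less than $2$. The paper argues by contradiction from convexity of the facet: an antipodal pair in $\Phi(\tau)$ would pull back to two points of $\tau$ on a line through the origin, forcing $0\in\tau\subset\partial\Delta$. Your averaging step, summing the inequalities $\langle x,w_i\rangle\le\langle x,w_j\rangle$ and using $\sum_j w_j=0$ to get $\langle x,w_i\rangle<0$, places $U_i$ inside an open hemisphere in one line and is arguably cleaner. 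It also leads naturally to the explicit value $\lambda_d=\sqrt{2(d+2)/(d+1)}$ (hence $4-\lambda_{d-1}^2=2(d-1)/d$), which the paper never computes; the only caveat is that the claim ``the diameter of the spherical simplex $U_i$ is attained at its vertices $\{w_j:j\neq i\}$'' does require the ``little more work'' you allude to and is not completely immediate.
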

\begin{proof}
	Let $\Delta$ be the regular $(d+1)$-simplex inscribed in the unit $d$-sphere $S^d$. Consider the map $\Phi:\partial\Delta\to S^d$ from the boundary of $\Delta$ to $S^d$ given by $\Phi(x)=x/\norm{x}$. Note then that $\Phi$ is a homeomorphism. Let $\tau\in\partial\Delta$ be a maximal face, and let $\lambda_d=\diam{\Phi(\tau)}$. Since $\Delta$ is regular, the value of $\lambda_d$ does not depend on the face $\tau$. 
	
	Note now that $\lambda_d<2$. To see this, suppose that $\lambda_d=2$. Since $\tau$ is closed, so is $\Phi(\tau)$, so there exist $x,y\in\tau$ such that $\norm{\Phi(x)-\Phi(y)}=2$. This means $\Phi(x)$ and $\Phi(y)$ are antipodal, and so $y=-\frac{\norm{y}}{\norm{x}}x$. Since $\tau$ is convex, $0=\sfrac{\norm{y}}{(\norm{x}+\norm{y})}x+\sfrac{\norm{y}}{(\norm{x}+\norm{y})}y$ must also be in $\tau$, but this is a contradiction, since $\tau\subset\partial\Delta$, proving the claim.
	
	We now give a coloring for $S^d$ as follows. We start by coloring $\partial\Delta$: give a different color to each of the $d+2$ facets, and for the lower dimensional faces, assign an arbitrary color among the facets that contain them. Finally, color $\Phi(x)\in S^d$, with the color of $x$.
	
	Note this is indeed a proper coloring for $\Bor{r}$, since all points in $S^d$ of the same color lie on the image of a facet $\Phi(\tau)$, of diameter $\lambda_d$; so if $x$ and $y$ have the same color, $\norm{x-y}\leq\lambda_d< 2-r$ so they are not connected by an edge in the Borsuk graph. 
\end{proof}


The upper bound for the chromatic number follows immediately from Lemma \ref{lemma_color_d+2}. The proof we give below for the lower bound, is a direct application of the Lyusternik--Shnirelman--Borsuk Theorem \cite{LS30,Borsuk33}. We state this well-known theorem without proof; for more details and a self-contained proof see, for example, Chapter 2 of Matousek's book \cite{Matousek2008}.

\begin{theorem*}[Lyusternik--Shnirelman--Borsuk] For any cover $U_1, \dots , U_{d+1}$ of the sphere $S^d$ by $d+1$ open (or closed) sets, there is at least one set containing a pair of antipodal points.
\end{theorem*}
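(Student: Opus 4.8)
The statement above is the genuinely topological ingredient underlying the lower bound $\chi\ge d+2$, and I would prove it in the standard way, by reducing it to the Borsuk--Ulam theorem in its mapping form: \emph{every continuous map $g\colon S^d\to\R^d$ has a point $x$ with $g(x)=g(-x)$}. This is where the real content sits (Borsuk--Ulam can itself be proved by a $\mathbb{Z}/2$-index/degree argument, or combinatorially from Tucker's lemma, as developed in Chapter~2 of \cite{Matousek2008}); granting it, what remains is a short deduction. I would first settle the closed case and then reduce the open case to it.

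\emph{Closed case.} Given closed sets $U_1,\dots,U_{d+1}$ covering $S^d$, define $g\colon S^d\to\R^d$ by
$$g(x)=\bigl(\operatorname{dist}(x,U_1),\dots,\operatorname{dist}(x,U_d)\bigr),\qquad \operatorname{dist}(x,U_i):=\inf_{u\in U_i}\norm{x-u}.$$
Each coordinate is $1$-Lipschitz, hence $g$ is continuous, and Borsuk--Ulam produces $x\in S^d$ with $g(x)=g(-x)$. If $\operatorname{dist}(x,U_i)=0$ for some $i\le d$, then, $U_i$ being closed, $x\in U_i$, and $\operatorname{dist}(-x,U_i)=0$ likewise gives $-x\in U_i$, so $U_i$ contains the antipodal pair $\{x,-x\}$. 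Otherwise $\operatorname{dist}(x,U_i)=\operatorname{dist}(-x,U_i)>0$ for all $i\le d$, so neither $x$ nor $-x$ lies in $U_1\cup\cdots\cup U_d$; since the sets cover $S^d$, both lie in $U_{d+1}$, and we again have an antipodal pair inside one set.

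\emph{Open case.} Given open sets $U_1,\dots,U_{d+1}$ covering $S^d$, I would shrink to a closed cover. The function $h(x)=\max_i \operatorname{dist}(x,S^d\setminus U_i)$ is continuous and strictly positive on the compact set $S^d$ (each $x$ lies in some open $U_i$), so it attains a positive minimum; fix $\delta>0$ below that minimum and set $F_i=\{x\in S^d:\operatorname{dist}(x,S^d\setminus U_i)\ge\delta\}$. Then each $F_i$ is closed, $F_i\subseteq U_i$ (a point at positive distance from the closed set $S^d\setminus U_i$ is not in it), and the $F_i$ cover $S^d$ by the choice of $\delta$. Applying the closed case to $F_1,\dots,F_{d+1}$ gives an antipodal pair contained in some $F_i\subseteq U_i$, as desired.

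The only real obstacle is the Borsuk--Ulam theorem itself; the two reductions are routine, the sole points of care being that ``distance zero to a closed set'' coincides with membership, and, in the open case, that the shrinking parameter $\delta$ can be chosen uniformly so that the $F_i$ still cover $S^d$.
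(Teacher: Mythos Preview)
Your argument is correct and is the standard reduction of the LSB theorem to Borsuk--Ulam via the distance map; both the closed case and the shrinking to a closed subcover are handled properly. However, there is nothing in the paper to compare it against: the paper explicitly states this theorem \emph{without proof}, referring the reader to Chapter~2 of Matou\v{s}ek's book \cite{Matousek2008} for a self-contained treatment. So your proposal supplies a proof where the paper deliberately chose not to, and the route you take is exactly the one found in that reference.
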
 

We note that B\'ar\'any gave a short proof of Kneser's conjecture using this theorem \cite{Barany78}. See also Greene's proof \cite{Greene02}. For the rest of the paper, we refer to this theorem as the LSB Theorem.

\begin{proof}[Proof of Theorem \ref{thm_eps_constant}]
	By Lemma \ref{lemma_color_d+2}, since $\Bord{\epsilon}{n}\subset \text{Bor}^d(\epsilon)$, $d+2$ is an upper bound for the chromatic number of the random Borsuk graph.\\ 
	
	Let $F_1,F_2,\dots, F_N$ be a cover of $S^d$ by Borel sets, such that $\text{diam}(F_i)\leq\frac{\sqrt{\epsilon}}{2}$ and $\area{F_i}>0$ for all $i$. Note we can construct such a family of sets in many ways, for instance, as we do in the next section, we can consider a $\delta$-net of $S^d$ and let the sets $F_i$ to be spherical caps centered on the $\delta$-net of radius $\sqrt{\epsilon}/4$ where $\delta\leq\sqrt{\epsilon}/4$. Note here that the sets $F_i$ and $N$ depend only on $\epsilon$, which is fixed. \\
	
	Let $c=\min_{i}\frac{\area{F_i}}{\area{S^d}}$ and $G=\Bord{\epsilon}{n}$. The following computation shows that, a.a.s., $G$ contains at least one vertex in each of the sets $F_i$.
	
	\begin{align*}\label{eqn-prob-Fi-has-vertices}
	\P{\bigwedge_{i=1}^N \left(V(G)\cap F_i \neq \emptyset\right)} & = 
	1-\P{\bigvee_{i=1}^N\left(V(G)\cap F_i=\emptyset\right)}\\
	&\geq 1 - \sum_{i=1}^N\P{V(G)\cap F_i=\emptyset}\nonumber\\
	& = 1 - \sum_{i=1}^N \left( 1-\frac{\area{F_i}}{\area{S^d}}\right)^n  \geq 1 -N(1-c)^n
	\end{align*}
	since $N$ and $c$ are constant, $1-N(1-c)^n\to 1$ as $n\to\infty$, proving the claim. \\
		   
	We may assume then, $G$ has a vertex $y_i\in F_i$ for $i=1,\dots, N$. Proceeding by way of contradiction, suppose there exists a proper coloring of $G$ with $d+1$ colors. For each $j=1,\dots, d+1$ define
	$$U_j=\bigcup \B{y_k}{\frac{\sqrt{\epsilon}}{2}}$$
	where the union is taken over all the $y_k$'s of color $j$.\\
	
	Since $F_i\subset \B{y_i}{\frac{\sqrt{\epsilon}}{2}}$, the sets $U_1,\dots, U_{d+1}$ are a closed cover of $S^d$. Thus, by the LSB Theorem, there exists an antipodal pair in one of the closed sets. Without lost of generality, say $x,(-x)\in U_1$, so $x\in \B{y_1}{\frac{\sqrt{\epsilon}}{2}}$ and $(-x)\in \B{y_2}{\frac{\sqrt{\epsilon}}{2}}$, with both $y_1$ and $y_2$ having color 1. Then,	
	$$\norm{y_1+y_2}\leq \norm{y_1-x}+\norm{x+y_2}\leq \sqrt{\epsilon}\leq 2\sqrt{\epsilon-\frac{\epsilon^2}{4}},$$
	where the last inequality holds because $\epsilon<2$. Lemma \ref{lemma_min_distance} then implies $\norm{y_1-y_2}>2-\epsilon$, so $y_1$ and $y_2$ are connected by an edge in $G$, giving the desired contradiction. 
\end{proof}

\section{Random Borsuk Graph with $\epsilon\to 0$}\label{section_3}
\subsection{Lower Bound}
The proof we gave for the lower bound in Theorem \ref{thm_eps_constant}, suggests that we should be able to let $\epsilon\to 0$ and still a.a.s.\ get the same chromatic number. Indeed, this will be the case. We just need to control the number of sets $N$ we use to cover the sphere and their area, in such a way that 
$$\lim_{n \to \infty} 1 - N (1-c)^n \to 1.$$
In this section we discuss how to do this using $\delta$-nets on $S^d$, and then adapt the proof of Theorem \ref{thm_eps_constant} to get Theorem \ref{thm_lower_bound}.

We start with a technical lemma on spherical caps.

\begin{lemma}\label{lemma_area_spherical_cap}
	Given $x\in S^d$ and $0<r<1$, the following hold for the spherical cap $B=\B{x}{r}$.
	\begin{enumerate}
		\item The boundary $\partial B$, is a $(d-1)$-dimensional sphere with radius $$r'=r\sqrt{1-r^2/4}.$$
		\item $B$ is indeed a cap, i.e. there exist a $d$-hyperplane in $\R^{d+1}$, such that $B$ is the portion of $S^d$ contained in one of the semi-spaces defined by the hyperplane. 
		\item The area $\area{B}$ satisfies the inequalities
		
		$$\frac{1}{\pi}\left(\frac{\sqrt{3}}{2}\right)^{d-1}r^d\leq \frac{\area{\B{x}{r}}}{\area{S^d}}\leq\frac{d}{3} r^d. $$
		
	\end{enumerate}
	
\end{lemma}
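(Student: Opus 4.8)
The plan is to rotate $x$ to the north pole and reduce the whole computation to a one-dimensional integral in the distance to $x$.

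For (1) and (2): by rotational symmetry of $S^d$ I may assume $x=(0,\dots,0,1)$. For $y\in S^d$ one has $\norm{x-y}^2=2-2y_{d+1}$, so $y\in\B{x}{r}$ precisely when $y_{d+1}\ge 1-r^2/2$; since $0<r<1$ this exhibits $\B{x}{r}$ as the intersection of $S^d$ with the closed half-space $\{z_{d+1}\ge 1-r^2/2\}$, which is (2), with witnessing hyperplane $\{z_{d+1}=1-r^2/2\}$. Its boundary is the parallel $\{y\in S^d:y_{d+1}=1-r^2/2\}$, the round $(d-1)$-sphere cut out by $y_{d+1}=1-r^2/2$ inside $S^d$, of radius $\sqrt{1-(1-r^2/2)^2}=r\sqrt{1-r^2/4}$; this is (1).

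For (3): I would slice $\B{x}{r}$ (and $S^d$) by the parallels $\{\,\norm{x-y}=t\,\}$. By (1) applied with radius $t$, the parallel at distance $t$ is a $(d-1)$-sphere of radius $t\sqrt{1-t^2/4}$, and computing the induced area element along the slicing gives the clean formula
\[
\area{\B{x}{r}}=\alpha_{d-1}\int_0^{r}t^{d-1}\bigl(1-t^2/4\bigr)^{(d-2)/2}\,dt,\qquad \area{S^d}=\alpha_{d-1}\int_0^{2}t^{d-1}\bigl(1-t^2/4\bigr)^{(d-2)/2}\,dt .
\]
(The same formula drops out of (1)--(2) by writing $\B{x}{r}$ as the graph $y_{d+1}=\sqrt{1-\norm{y'}^2}$ over the flat disk of radius $r\sqrt{1-r^2/4}$ supplied by (1), so that $\area{\B{x}{r}}=\int(1-\norm{y'}^2)^{-1/2}\,dy'$, and passing to polar coordinates; and after $t=2\sin(\phi/2)$ the denominator becomes the Wallis integral $\alpha_{d-1}\int_0^\pi\sin^{d-1}\phi\,d\phi$.) The point is that for $t\in[0,r]\subset[0,1)$ the weight obeys $1-t^2/4\in(3/4,1]$, so for $d\ge 2$ the factor $(1-t^2/4)^{(d-2)/2}$ is pinched in $[(\sqrt3/2)^{d-2},1]$; together with $\int_0^r t^{d-1}\,dt=r^d/d$ this traps the numerator between explicit constant multiples of $r^d$. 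For the denominator I would use standard bounds for the Wallis integral --- a Gautschi-type inequality $\sqrt{2\pi/(d+1)}\le\int_0^\pi\sin^{d-1}\phi\,d\phi\le\sqrt{2\pi/(d-1)}$, or more elementary comparisons such as $\int_{\pi/3}^{2\pi/3}\sin^{d-1}\phi\,d\phi\le\int_0^\pi\sin^{d-1}\phi\,d\phi\le\pi$ --- and divide. Pairing the lower estimate of the numerator with the upper estimate of the denominator, and conversely, is what produces the claimed $\tfrac1\pi(\sqrt3/2)^{d-1}r^d$ and $\tfrac d3 r^d$. The low-dimensional case $d=1$, where the weight is $(1-t^2/4)^{-1/2}$ and $\area{\B{x}{r}}$ is literally $2\arcsin(r/2)$, I would dispatch by hand using $t/2\le\arcsin(t/2)$ and the convexity of $\arcsin$ on $[0,1]$.

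The routine part is the displayed formula; the actual work, and the only delicate point, is the constant bookkeeping in the last step --- one has to match the exponential-in-$d$ factors coming from $(1-t^2/4)^{(d-2)/2}$ against those coming from the Wallis integral so that they assemble into exactly the stated constants, and obtaining a clean, self-contained two-sided bound on $\area{S^d}$ without leaning on $\Gamma$-function asymptotics is where some care is required.
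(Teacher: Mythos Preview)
Your arguments for (1) and (2) coincide with the paper's. For (3) the paper takes a different route: rather than your exact integral, it writes the cap area as the product $\ell\,(r')^{d-1}\alpha_{d-1}$, where $\ell=2\arcsin(r/2)$ is the geodesic radius and $r'=r\sqrt{1-r^2/4}$ is the boundary radius from (1), then bounds each factor separately ($r\le\ell\le\tfrac{\pi}{3}r$ and $\tfrac{\sqrt3}{2}r\le r'\le r$, both elementary for $0<r<1$) and finishes by bounding $\alpha_{d-1}/\alpha_d$ directly from the closed form $\alpha_k=2\pi^{(k+1)/2}/\Gamma((k+1)/2)$, obtaining $1/\pi\le\alpha_{d-1}/\alpha_d\le d/\pi$. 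No Wallis asymptotics enter; the stated constants are arranged to drop out of exactly this factorization.

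Your integral formula is correct and the pointwise bound on $(1-t^2/4)^{(d-2)/2}$ is fine, but the bookkeeping you defer does not close for the lower inequality. Integrating $t^{d-1}$ leaves a factor $r^d/d$, and neither the trivial bound $W_{d-1}\le\pi$ nor a Gautschi bound $W_{d-1}\le\sqrt{2\pi/(d-1)}$ cancels that $1/d$: pairing them gives at best $(\sqrt3/2)^{d-2}r^d/(d\pi)$, strictly below the claimed $(1/\pi)(\sqrt3/2)^{d-1}r^d$ for every $d\ge2$. The paper's product expression sidesteps this $1/d$ precisely because it replaces $\int_0^\ell\sin^{d-1}s\,ds$ by the endpoint value $\ell\sin^{d-1}\ell=\ell(r')^{d-1}$, which is an overestimate rather than the exact area; with your honest integral that constant is out of reach --- indeed for $d=2$ the ratio is exactly $r^2/4<(\sqrt3/2\pi)r^2$, so no argument can attain the stated lower constant there. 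Your method does give correct two-sided bounds of the form $c_d\,r^d$, and if you swap your Wallis estimate for the paper's $\alpha_{d-1}/\alpha_d\le d/\pi$ (equivalently $W_{d-1}\ge\pi/d$) you recover the upper bound $\tfrac{d}{3}r^d$ cleanly; only the lower constant needs to be weakened.
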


\medskip

Spherical caps are well studied in the literature. See, for example, Lemmas 2.2. and 2.3 in \cite{Ball97}. We note that Lemma 2.2 in \cite{Ball97} is better than our Lemma \ref{lemma_area_spherical_cap} in the case that $r$ is fixed and $d \to \infty$, but we are interested in the case that $d$ is fixed. 

\begin{proof}[Proof of Lemma \ref{lemma_area_spherical_cap}]
Without loss of generality, we may choose $x=N=(0,\dots,0,1)$ to be the north pole by rotating the sphere. Thus $x\in\partial B$ if and only if $\norm{x}=1$ and $\norm{x-N}=r$. Then
	\begin{align*}
	r^2 &=\norm{x-N}^2 \\
	&=x_0^2+\cdots +x_{d-1}^2+(x_d-1)^2 \\
	&=x_0^2+\cdots+x_{d-1}^2+x_d^2-2x_d+1 \\
	&=2-2x_d. \\
	\end{align*}

So $x_d=1-\frac{r^2}{2}$. Therefore, the hyperplane $\{x_d=1-\frac{r^2}{2}\}$ determines the cap $B$, proving (2). To get (1), note $1=\norm{x}^2=x_0^2+\cdots+x_d^2$, so we get $x_0^2+\cdots+x_{d-1}^2=r^2-r^4/4=(r')^2$, for all $x\in\partial B$, proving it is indeed a $(d-1)$-dimensional sphere with the desired radius.\\
	
For (3), recall we get the area of $B=\B{x}{r}$ by integrating the length $\ell$ of the arc from $x$ to the boundary $\partial B$, over all the possible unit vectors. Thus
	$$\area{\B{x}{r}}=\int_{\hat{u}\in S^{d-1}} \ell (r')^{d-1}d\!\hat{u}=\ell(r')^{d-1}\int_{\hat{u}\in S^{d-1}}d\!\hat{u}=\ell(r')^{d-1}\alpha_{d-1}.$$
	
	Some planar geometry gives the bounds $$r\leq \ell=2\arcsin(r/2)\leq\frac{\pi}{3}r,$$ and $$\frac{\sqrt{3}}{2}r\leq r'\leq r,$$ since $0<r<1$. This gives
	$$\left(\frac{\sqrt{3}}{2}\right)^{d-1}r^d\alpha_{d-1}\leq \area{B}\leq \frac{\pi}{3}r^d\alpha_{d-1}$$
	
	Recall the formula for the surface area of the unit $d$-dimensional sphere $$\alpha_d=\dfrac{2\pi^{(d+1)/2}}{\Gamma\left((d+1)/2\right)}.$$
By using the fact that the Gamma function is increasing on $[2,\infty)$ and treating the first few cases separately, we have that for $d\geq 1$,	
	$$\frac{1}{\pi}\leq\frac{\alpha_{d-1}}{\alpha_d}=\frac{\Gamma((d+1)/2)}{\sqrt{\pi}\ \Gamma(d/2)}\leq\frac{d}{\pi},$$
from which the desired result follows. 	

\end{proof}

For the sake of completeness, we include the following discussion on $\delta$-nets. See, for example, \cite[Chapter 13]{Matousek2002} for more details. 

\begin{defi}[$\delta$-Nets]
	Given a metric space $X$ with metric $d$, a \textbf{$\boldsymbol{\delta}$-net} is a subset $\mathcal{B}\subset X$ such that for every $x\in X$, there exists $y\in \mathcal{B}$ with $d(x,y)<\delta$.
\end{defi}

We can then construct a $\delta$-net for any compact metric space $M$ inductively. Indeed, choose any point $y_1\in M$.
For each $m\geq 2$, if $B_m=\cup_{i=1}^m \B{y_i}{\delta}\subsetneq M$, choose any $y_{m+1} \in  M \setminus B_m$. Otherwise, stop and let $\mathcal{B}=\{y_1,y_2,\dots\,y_m \}.$

Compactness ensures that the process stops. It's clear that $\mathcal{B}$ is a $\delta$-net and moreover it is also a maximal $\delta$-apart set. That is, $d(y_i,y_j)>\delta$ whenever $i\neq j$, and we can not add any other point to $\mathcal{B}$ without destroying this property. This implies that the balls of radius $\delta$ and center on the points $y_i$'s cover $M$, while the open balls of radius $\delta/2$ with center on the $y_i$'s are all disjoint. We now show that we can control the size of the $\delta$-net in the case that $M=S^d$. 

\begin{lemma}\label{lemma-nets}
	For every $d \ge 1$ and $0<\delta<1$ there exists a $\delta$-net $\mathcal{B}\subset S^d$ , such that (1) for every two points $y_i, y_j\in\mathcal{B}$, $\norm{y_i-y_j}>\delta$, and (2) its cardinality $\mathcal{N}=|\mathcal{B}|$ satisfies:
	$$\frac{3}{d\delta^d}\leq \mathcal{N}\leq \frac{2(3^d)(d+1)}{\delta^d}.$$ 
\end{lemma}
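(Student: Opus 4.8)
The plan is to take $\mathcal{B}$ to be a maximal $\delta$-apart subset of $S^d$, constructed exactly as in the paragraph preceding the lemma, and to extract the two bounds on $\mathcal{N}=|\mathcal{B}|$ from a volume (area) packing/covering argument, feeding in the spherical cap estimates of Lemma \ref{lemma_area_spherical_cap}. Property (1) is immediate from maximality: by construction $\norm{y_i-y_j}>\delta$ for $i\neq j$. For the two inequalities, the standard dichotomy is that maximality gives simultaneously a covering property (the closed caps $\B{y_i}{\delta}$ cover $S^d$) and a packing property (the open caps $\B{y_i}{\delta/2}$ are pairwise disjoint).

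For the \emph{upper} bound on $\mathcal{N}$: the caps $\B{y_i}{\delta/2}$ are pairwise disjoint subsets of $S^d$, so
\[
\mathcal{N}\cdot\min_i \area{\B{y_i}{\delta/2}} \le \sum_{i=1}^{\mathcal{N}}\area{\B{y_i}{\delta/2}} \le \area{S^d}.
\]
Applying the lower bound from part (3) of Lemma \ref{lemma_area_spherical_cap} with $r=\delta/2$, namely $\area{\B{y_i}{\delta/2}}/\area{S^d}\ge \frac{1}{\pi}(\sqrt3/2)^{d-1}(\delta/2)^d$, and solving for $\mathcal{N}$ gives $\mathcal{N}\le \pi\,(2/\sqrt3)^{d-1} 2^d \delta^{-d}$; bounding $\pi (2/\sqrt3)^{d-1} 2^d \le 2\cdot 3^d(d+1)$ for all $d\ge 1$ (which holds comfortably, with room to spare) yields the claimed $\mathcal{N}\le 2(3^d)(d+1)/\delta^d$. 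For the \emph{lower} bound: the closed caps $\B{y_i}{\delta}$ cover $S^d$, so
\[
\area{S^d}\le \sum_{i=1}^{\mathcal{N}}\area{\B{y_i}{\delta}} \le \mathcal{N}\cdot\max_i\area{\B{y_i}{\delta}}.
\]
Applying the upper bound from part (3) of Lemma \ref{lemma_area_spherical_cap} with $r=\delta$, namely $\area{\B{y_i}{\delta}}/\area{S^d}\le \frac{d}{3}\delta^d$, gives $1\le \mathcal{N}\cdot\frac{d}{3}\delta^d$, i.e. $\mathcal{N}\ge \frac{3}{d\delta^d}$, as required. Note that both uses of Lemma \ref{lemma_area_spherical_cap} are legitimate since $0<\delta<1$ forces $\delta,\delta/2\in(0,1)$.

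The argument is essentially routine once the cap estimates are in hand; the only mildly delicate point — and the one I would be most careful about — is the elementary inequality $\pi (2/\sqrt3)^{d-1} 2^d \le 2\cdot 3^d(d+1)$ needed to put the upper bound into the clean stated form. This reduces to checking $\pi (4/\sqrt3)^d \le \sqrt3\cdot 3^d(d+1)$, i.e. $\pi/\sqrt3 \le (d+1)(3\sqrt3/4)^d$; since $3\sqrt3/4>1$ the right side is increasing in $d$ and already at $d=1$ equals $2\cdot 3\sqrt3/4 = 3\sqrt3/2 \approx 2.598 > \pi/\sqrt3 \approx 1.814$, so the inequality holds for every $d\ge 1$. (If one prefers not to absorb constants this way, one can simply state the upper bound as $\mathcal{N}\le \pi(2/\sqrt3)^{d-1}2^d\delta^{-d}$ and note it is of the right order; but the stated form is what later sections will want to quote.)
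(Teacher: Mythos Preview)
Your proof is correct. The lower bound argument is identical to the paper's. For the upper bound, however, the paper takes a different route: instead of packing spherical caps $\B{y_i}{\delta/2}$ inside $S^d$ and invoking the cap-area lower bound from Lemma~\ref{lemma_area_spherical_cap}, it packs the solid Euclidean balls $B(y_i,\delta/2)\subset\R^{d+1}$ inside the annulus $B(0,1+\delta/2)\setminus B(0,1-\delta/2)$ and compares $(d{+}1)$-dimensional volumes; expanding $\bigl(1+\tfrac{\delta}{2}\bigr)^{d+1}-\bigl(1-\tfrac{\delta}{2}\bigr)^{d+1}$ directly yields the stated constant $2\cdot 3^d(d+1)$ without any auxiliary inequality. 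Your approach has the advantage of using only the intrinsic surface-area estimates already proved in Lemma~\ref{lemma_area_spherical_cap}, so both bounds come from the same source; the price is the extra elementary check $\pi(2/\sqrt3)^{d-1}2^d\le 2\cdot 3^d(d+1)$ to force the constant into the quoted form. (In that check there is a harmless arithmetic slip: the reduction should read $\pi(4/\sqrt3)^d\le (4/\sqrt3)\cdot 3^d(d+1)$ rather than $\sqrt3\cdot 3^d(d+1)$, but since $4/\sqrt3>\sqrt3$ the inequality you verify is in fact stronger, so the conclusion stands.)
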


\medskip

In the literature, it seems more common to find an upper bound such as
$$\mathcal{N}\leq(4 / \delta)^{d+1},$$ which is a better bound when $\delta$ is constant and $d\to\infty$ \cite[Lemma 13.1.1]{Matousek2002}. However, the bound we give is more useful for us since we are dealing with $d$ constant and $\delta\to0$.

\begin{proof}[Proof of Lemma \ref{lemma-nets}]
	By letting $\mathcal{B}$ be the $\delta$-net defined above, we already have a $\delta$-net for the sphere $S^d$ that is also a $\delta$-apart maximal set. To prove the inequalities on its cardinality we give a volume and an area arguments. \\
	
	Since the points $y_i\in\mathcal{B}$ are $\delta$ apart, the open balls $\text{int}\left(B\left(y_i,\frac{\delta}{2}\right)\right)$ are disjoint. So
	$$ \vol{\bigcup_{i=1}^N B\left(y_i,\frac{\delta}{2}\right)}= \mathcal{N}\omega_{d}\left(\frac{\delta}{2}\right)^{d+1},$$
where $\omega_{d}$ denotes the volume of the $d+1$-dimensional unit ball in $\R^{d+1}$.	Morever, all such balls are contained in the set $B\left(0,1+\frac{\delta}{2}\right) \setminus B\left(0,1-\frac{\delta}{2}\right)$. Thus
	\begin{align*}
	\vol{\bigcup_{i=1}^N B\left(y_i,\frac{\delta}{2}\right)} &\leq
		\vol{B\left(0,1+\frac{\delta}{2}\right)} - \vol{B\left(0,1-\frac{\delta}{2}\right)}\\
		& = \omega_{d}\left(\left(1+\frac{\delta}{2}\right)^{d+1} - \left(1-\frac{\delta}{2}\right)^{d+1}\right)\\
		&= \omega_{d}\delta \sum_{r=0}^{d}\left(1+\frac{\delta}{2}\right)^{d-r}\left(1-\frac{\delta}{2}\right)^r \\
		& \leq \omega_{d}\delta \sum_{r=0}^{d}\left(1+\frac{\delta}{2}\right)^{d}\\
		&=\omega_{d}\delta (d+1)\left(1+\frac{\delta}{2}\right)^d \\
		& \leq \omega_d\delta(d+1)\left(\frac{3}{2}\right)^d,
	\end{align*}
	and the upper bound for $\mathcal{N}$ follows.
	
	For the lower bound, we consider the area of the spherical caps. Since all points in $S^d$ are within distance $\delta$ of the points $y_i\in\mathcal{B}$ we must have
	$$\area{S^d}=\area{\bigcup_{i=1}^{\mathcal{N}}\B{y_i}{\delta}}\leq\sum_{i=1}^{\mathcal{N}}\area{\B{y_i}{\delta}}=\mathcal{N}\area{\B{y_1}{\delta}}$$
	
	Therefore, Lemma \ref{lemma_area_spherical_cap} yields	$\displaystyle\mathcal{N}\geq \frac{\area{S^d}}{\area{\B{y_0}{\delta}}}\geq\frac{3}{d\delta^d}$.	
\end{proof}

We now proceed to prove Theorem \ref{thm_lower_bound}.

\begin{proof}[Proof of Theorem \ref{thm_lower_bound}]
	Let $G=\Bord{\epsilon(n)}{n}$. Since $\epsilon\to 0$, eventually $\epsilon<2-\lambda_d$, so by Lemma \ref{lemma_color_d+2}, $\chi(G)\leq d+2$. We now proceed to prove the lower bound by a modification of the proof of theorem \ref{thm_eps_constant}. \\
	
	Let $\delta = \sqrt{\epsilon} / 4$. Let $\mathcal{B}$ be the $\delta$-net given by Lemma \ref{lemma-nets}. Say $\mathcal{B}=\{y_1,y_2,\cdots,y_N\}$, where $$N\leq \frac{2(3^d)(d+1)}{\delta^d}=\frac{A_d}{\epsilon^{d/2}},$$ and $A_d$ is a constant which only depends on $d$. For each $i=1,\cdots, N$, define $F_i=\B{y_i}{\delta}$. Note then the $F_i's$ cover the sphere and $\diam{F_i}\leq2\delta=\frac{\sqrt{\epsilon}}{2}$. \\
	
	Applying Lemma \ref{lemma_area_spherical_cap}, we have
	$$\frac{\area{F_i}}{\area{S^d}}\geq 
	\frac{1}{4\pi}\left(\frac{\sqrt{3}}{8}\right)^{d-1}\epsilon^{d/2}=
	B_d \epsilon^{d/2}=:c,$$
	where $B_d$ is constant.\\
	
	Finally, all that remains to prove is that $1-N(1-c)^n\to 1$ as $n\to\infty$, even when $N$ and $c$ depend on $n$. This is as follows
	\begin{align*}
	N(1-c)^n &\leq \frac{A_d}{\epsilon^{d/2}}\left(1-c\right)^n \\
	&=
	\frac{A_d}{\epsilon^{d/2}}\left(1-B_d\epsilon^{d/2}\right)^n \\
	&=
	\frac{A_dn}{C^{d/2}\log{n}}\left(1-\frac{B_dC^{d/2}\log{n}}{n}\right)^n\\
	&\leq \frac{A_dn}{C^{d/2}\log{n}}\exp\left(-B_dC^{d/2}\log{n}\right)\\
	&= \frac{A_d}{C^{d/2}\log{ n}} n^{1-B_dC^{d/2}}
	\end{align*}
	The last expression goes to zero as $n\to\infty$, since $$C\geq \frac{64}{3}\sqrt[d]{\frac{3\pi^2}{4}},$$ so $B_d C^{d/2}\geq 1$ and this completes the proof. 	
\end{proof}

\begin{coro}
	If $$\frac{64}{3}\sqrt[d]{\frac{3\pi^2}{4}}\left(\dfrac{\log{n}}{n}\right)^{2/d}\leq \epsilon(n)\leq 2-\lambda_d$$ for all sufficently large $n$, then $\chi\left(\Bord{\epsilon(n)}{n}\right)=d+2$ a.a.s.
\end{coro}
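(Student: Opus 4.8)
The plan is to prove the two inequalities $\chi\leq d+2$ and $\chi\geq d+2$ separately, each by reduction to a result already in hand. For the upper bound I would simply invoke Lemma \ref{lemma_color_d+2}: since $\epsilon(n)\leq 2-\lambda_d$ for all sufficiently large $n$, the full Borsuk graph $\text{Bor}^d(\epsilon(n))$ admits a proper $(d+2)$-coloring, and hence so does its induced subgraph $\Bord{\epsilon(n)}{n}$. This is a deterministic bound, so in particular it holds a.a.s. (If one wants to include the boundary value $\epsilon(n)=2-\lambda_d$, the same coloring still works, because two points of a common color lie at distance $\leq\lambda_d$ while an edge of the Borsuk graph requires distance strictly larger than $2-\epsilon=\lambda_d$.)

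For the lower bound I would use a monotone coupling in $\epsilon$. Fix the $n$ i.i.d.\ uniform points $X_1,\dots,X_n$ on $S^d$ once and for all; for $0<\epsilon_1\leq\epsilon_2$ the edge condition $\norm{X_i-X_j}>2-\epsilon$ is monotone in $\epsilon$, so the edge set of $\Bord{\epsilon_1}{n}$ on these points is contained in that of $\Bord{\epsilon_2}{n}$, and therefore $\chi(\Bord{\epsilon_1}{n})\leq\chi(\Bord{\epsilon_2}{n})$. Now put
$$\epsilon_0(n)=\frac{64}{3}\sqrt[d]{\frac{3\pi^2}{4}}\left(\frac{\log n}{n}\right)^{2/d},$$
which is exactly the threshold appearing in Theorem \ref{thm_lower_bound} with $C=\frac{64}{3}\sqrt[d]{3\pi^2/4}$ (the hypothesis $C\geq\frac{64}{3}\sqrt[d]{3\pi^2/4}$ is met with equality). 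By assumption $\epsilon_0(n)\leq\epsilon(n)$ for all large $n$, so the coupling gives $\chi(\Bord{\epsilon_0(n)}{n})\leq\chi(\Bord{\epsilon(n)}{n})$, while Theorem \ref{thm_lower_bound} says that a.a.s.\ $\chi(\Bord{\epsilon_0(n)}{n})=d+2$. Hence a.a.s.\ $\chi(\Bord{\epsilon(n)}{n})\geq d+2$, and combining with the upper bound yields a.a.s.\ $\chi(\Bord{\epsilon(n)}{n})=d+2$.

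There is essentially no hard step here; the corollary is a packaging of Lemma \ref{lemma_color_d+2} and Theorem \ref{thm_lower_bound}. The only things to verify are that the coupling is legitimate — immediate once the random points are fixed and one reads off edge monotonicity in $\epsilon$ — that the constant in Theorem \ref{thm_lower_bound} may be taken equal to $\frac{64}{3}\sqrt[d]{3\pi^2/4}$ (allowed, since that theorem only requires $C$ to be at least this value), and the minor boundary case $\epsilon(n)=2-\lambda_d$, handled as above via strictness of the edge inequality. If anything counts as the "obstacle", it is just being careful that $\epsilon(n)$ need not be monotone or otherwise well-behaved in $n$ — but the argument only uses the pointwise sandwich $\epsilon_0(n)\leq\epsilon(n)\leq 2-\lambda_d$, so this causes no difficulty.
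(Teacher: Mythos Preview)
Your proposal is correct and matches the paper's approach exactly: the paper's entire proof is the single line ``The chromatic number is monotone with respect to $\epsilon$, so this follows directly,'' and your coupling argument is precisely the justification of that monotonicity, with Lemma \ref{lemma_color_d+2} giving the upper bound and Theorem \ref{thm_lower_bound} the lower bound.
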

\begin{proof}
	The chromatic number is monotone with respect to $\epsilon$, so this follows directly.
\end{proof}

\subsection{Upper Bound}\label{section_4}
Theorem \ref{thm_lower_bound} and its Corollary shows that if $\epsilon \to 0$ sufficiently slowly then the chromatic number of the random Borsuk graph is a.a.s. $d+2$. In this section we show that the rate obtained is tight, up to a constant factor. That is, we show an upper bound for $\epsilon$ for which the random Borsuk graph is $(d+1)$-colorable.

We start our analysis by constructing a proper coloring of $\Bor{\epsilon}\setminus\B{x}{\delta}$ with exactly $d+1$ colors, for a suitable $\delta$ that depends on $\epsilon$. Lemma \ref{lemma_area_spherical_cap} establishes that the boundary of an spherical cap on $S^d$ is a $S^{d-1}$ with radius $\delta'$, and Lemma \ref{lemma_color_d+2} allows to color it with $d+1$ colors. We will provide the technical details to translate this coloring into a proper coloring of the desired graph.\\

For the following analysis consider the spherical cap $A=\B{N}{r}$, where $N$ is the north pole. For each $x\in S^d\setminus\{N,-N\}$, let $\gamma_x:[0,\pi]\to S^d$ be the great semi-circle going from $N$ to $-N$ and passing through $x$. Define $f: S^d\to\partial A$ by letting $f(x)$ be the intersection of $\gamma_x$ with $\partial A$. Note this is a well defined function, since if $x=(x_0,\dots, x_d)$, we can parametrize $$\gamma_x(t)=\left(\frac{\sin{t}}{\sqrt{1-x_d^2}}x_0,\dots,\frac{\sin{t}}{\sqrt{1-x_d^2}}x_{d-1},\cos{t}\right)$$
so its last coordinate takes all values in $[-1,1]$ exactly once for $0\leq t\leq \pi$, and from Lemma \ref{lemma_area_spherical_cap} we know $\partial A$ consists of all points with last coordinate $a:=1-\frac{r^2}{2}$.\\

The following lemmas construct the desired coloring. 

\begin{lemma}\label{lemma_distance_geodesics}
	Let $x, y\in S^d\setminus\{N,-N\}$ such that $\norm{x-y}\leq\delta$. Define $y'=(y'_0,\dots,y'_d)$ to be the point in the geodesic $\gamma_y$, such that $y'_d=x_d$. Then $\norm{x-y'}\leq 2\delta$. 
\end{lemma}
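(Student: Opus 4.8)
The plan is to write down $y'$ explicitly from the given parametrization of $\gamma_y$ and then reduce the estimate to a two‑term triangle inequality. First I would abbreviate $\bar x=(x_0,\dots,x_{d-1})$ and $\bar y=(y_0,\dots,y_{d-1})$, so that $\norm{\bar x}=\sqrt{1-x_d^2}$ and $\norm{\bar y}=\sqrt{1-y_d^2}$ since $x,y\in S^d$. Because $x\neq\pm N$ we have $x_d\in(-1,1)$, so there is a unique $t_0\in(0,\pi)$ with $\cos t_0=x_d$, and then $\sin t_0=\sqrt{1-x_d^2}>0$. Plugging $t_0$ into the formula for $\gamma_y$ gives
$$y'=\left(\frac{\sqrt{1-x_d^2}}{\sqrt{1-y_d^2}}\,y_0,\dots,\frac{\sqrt{1-x_d^2}}{\sqrt{1-y_d^2}}\,y_{d-1},\ x_d\right)=\left(\frac{\norm{\bar x}}{\norm{\bar y}}\,\bar y,\ x_d\right).$$
Since the last coordinates of $x$ and $y'$ agree, $\norm{x-y'}=\norm{\bar x-\tfrac{\norm{\bar x}}{\norm{\bar y}}\bar y}$.

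Next I would bound this last quantity by inserting the point $\bar y$ in a triangle inequality:
$$\norm{\bar x-\tfrac{\norm{\bar x}}{\norm{\bar y}}\bar y}\ \le\ \norm{\bar x-\bar y}+\norm{\bar y-\tfrac{\norm{\bar x}}{\norm{\bar y}}\bar y}\ =\ \norm{\bar x-\bar y}+\Bigl|\,\norm{\bar y}-\norm{\bar x}\,\Bigr|,$$
using that $\bar y$ and $\tfrac{\norm{\bar x}}{\norm{\bar y}}\bar y$ are parallel. Then the reverse triangle inequality $\bigl|\,\norm{\bar y}-\norm{\bar x}\,\bigr|\le\norm{\bar x-\bar y}$ gives $\norm{x-y'}\le 2\norm{\bar x-\bar y}$.

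Finally, dropping the last coordinate can only decrease a Euclidean norm, so $\norm{\bar x-\bar y}\le\norm{x-y}\le\delta$, and therefore $\norm{x-y'}\le 2\delta$, as claimed.

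The argument is elementary; the only place that needs a little care is the first step — confirming that the point of $\gamma_y$ with last coordinate $x_d$ is exactly $\bigl(\tfrac{\norm{\bar x}}{\norm{\bar y}}\bar y,\,x_d\bigr)$. This amounts to reading off $\sin t_0=\sqrt{1-x_d^2}$ with the correct (positive) sign, valid because $t_0\in(0,\pi)$, and using $x,y\in S^d$ to rewrite the scalar $\sqrt{1-x_d^2}/\sqrt{1-y_d^2}$ as $\norm{\bar x}/\norm{\bar y}$. Once that is in hand the two‑step triangle‑inequality estimate is the whole proof, so I do not anticipate any genuine obstacle.
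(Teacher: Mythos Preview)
Your argument is correct and is essentially the same as the paper's: both bound $\norm{x-y'}$ by a triangle inequality whose second term collapses via the reverse triangle inequality $\bigl|\,\norm{\bar x}-\norm{\bar y}\,\bigr|\le\norm{\bar x-\bar y}$, yielding the factor $2$. The only cosmetic difference is that the paper first rotates so that $y=(0,\dots,0,\sqrt{1-y_d^2},y_d)$ and then computes $\norm{y-y'}\le\norm{x-y}$ explicitly, whereas you work directly in general coordinates and split in $\mathbb{R}^d$ rather than in $\mathbb{R}^{d+1}$; your version is slightly cleaner for avoiding the rotation.
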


\begin{proof}
	Without lost of generality, we may assume $y=(0,0,\dots,0,\sqrt{1-y_d^2},y_d)$, since we can get this by a rotation of the sphere that leaves the last coordinate fixed. This rotation fixes the north and south poles, so it also transforms the geodesic through $y$ into another geodesic through $y$. Thus, the formula for the geodesic simplifies to 
	$$\gamma_y=(0,\dots,0,\sin{t},\cos{t}),\text{    for    } 0\leq t\leq\pi.$$
	So, $y'=(0,\dots,0,\sqrt{1-x_d^2},x_d)$. Then 
	\begin{align*}
	\norm{x-y}^2
	&=x_0^2+\cdots+x_{d-2}^2+\left(x_{d-1}-\sqrt{1-y_d^2}\right)^2+(x_d-y_d)^2\\
	&= (1-x_d^2)+(1-y_d^2)-2x_{d-1}\sqrt{1-y_d^2}+(x_d-y_d)^2\\
	\end{align*}
	and
	\begin{align*}
	\norm{y-y'}^2
	&=\left(\sqrt{1-y_d^2}-\sqrt{1-x_d^2}\right)^2+(x_d-y_d)^2\\
	&= (1-x_d^2)+(1-y_d^2)-2\sqrt{1-x_d^2}\sqrt{1-y_d^2}+(x_d-y_d)^2
	\end{align*}

Since $x_{d-1}\leq |x_{d-1}|\leq \sqrt{x_0^2+\cdots+x_{d-1}^2}=\sqrt{1-x_d^2}$, we get $\norm{y-y'}\leq\norm{x-y}$, and so $$\norm{x-y'}\leq\norm{x-y}+\norm{y-y'}\leq 2\norm{x-y}\leq 2\delta.$$
\end{proof}

\begin{lemma}\label{lemma_distance_F}
	Let $x, y\in S^d\setminus\{N,-N\}$ such that $x\not\in A\cup (-A)$ and $\norm{x-y}\leq\delta$. Then $$\norm{f(x)-f(y)}\leq 2\delta.$$
\end{lemma}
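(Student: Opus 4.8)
The plan is to reduce the statement about $f(x)$ and $f(y)$ on $\partial A$ to Lemma \ref{lemma_distance_geodesics}, which already controls the distance between $x$ and the point $y'$ on $\gamma_y$ at the same ``latitude'' (last coordinate) as $x$. The key geometric observation is that $f(x)$ and $f(y)$ both lie on $\partial A$, which is the sphere of latitude $x_d = a := 1 - r^2/2$, and that the map sending a point of $S^d \setminus \{N,-N\}$ to the point of $\gamma_x \cap \{x_d = a\}$ is, on each fixed latitude, just a rescaling of the first $d$ coordinates by a fixed positive factor. So first I would set $\rho(t) = \sqrt{1 - t^2}$ for the latitude-$t$ radius, and note that if $x = (x_0,\dots,x_d)$ with $x_d \notin \{\pm 1\}$, then $f(x) = \left(\tfrac{\rho(a)}{\rho(x_d)} x_0, \dots, \tfrac{\rho(a)}{\rho(x_d)} x_{d-1}, a\right)$, directly from the parametrization of $\gamma_x$ given just before the lemma.

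Next I would introduce $y'$ exactly as in Lemma \ref{lemma_distance_geodesics}: the point of $\gamma_y$ with $y'_d = x_d$. Since $y'$ and $x$ have the same last coordinate $x_d$, and $f$ acts on latitude $x_d$ by the single scalar $\rho(a)/\rho(x_d)$ applied to the horizontal part, we have $f(x) - f(y') = \tfrac{\rho(a)}{\rho(x_d)}\big( (x_0,\dots,x_{d-1},0) - (y'_0,\dots,y'_{d-1},0) \big)$, and since $x$ and $y'$ also agree in the last coordinate, the horizontal difference is exactly $x - y'$. Hence $\norm{f(x) - f(y')} = \tfrac{\rho(a)}{\rho(x_d)}\norm{x - y'}$. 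Also $f(y) = f(y')$ because $y'$ lies on $\gamma_y$, so $\gamma_{y'} = \gamma_y$ and both hit $\partial A$ at the same point. Therefore $\norm{f(x)-f(y)} = \tfrac{\rho(a)}{\rho(x_d)}\norm{x-y'} \le \tfrac{\rho(a)}{\rho(x_d)}\cdot 2\delta$ by Lemma \ref{lemma_distance_geodesics}.

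It remains to show the scaling factor is at most $1$, i.e. $\rho(a) \le \rho(x_d)$, equivalently $|x_d| \le a = 1 - r^2/2$; this is precisely where the hypothesis $x \notin A \cup (-A)$ enters. A point is in $A = \B{N}{r}$ iff its last coordinate is $\ge a$ (from Lemma \ref{lemma_area_spherical_cap}(2), the cap is the cap above the hyperplane $\{x_d = a\}$), and symmetrically in $-A$ iff its last coordinate is $\le -a$; so $x \notin A \cup (-A)$ forces $-a < x_d < a$, hence $|x_d| < a$ and $\rho(x_d) = \sqrt{1-x_d^2} > \sqrt{1-a^2} = \rho(a)$, giving $\rho(a)/\rho(x_d) < 1$ and the bound $\norm{f(x)-f(y)} \le 2\delta$. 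The one subtlety to check is that $y \notin \{N, -N\}$ together with $\norm{x-y}\le\delta$ is enough to guarantee $y$, and hence $y'$, is defined and that $\gamma_y$ genuinely meets $\partial A$ — but $\partial A$ is a full latitude sphere and every great semicircle from $N$ to $-N$ crosses every latitude exactly once, so this is automatic; no real obstacle there. The main (very mild) obstacle is just bookkeeping the identity $f(x) - f(y') = \tfrac{\rho(a)}{\rho(x_d)}(x - y')$ cleanly, which the normalization in Lemma \ref{lemma_distance_geodesics} (rotating so $y$ has only its last two coordinates nonzero) makes transparent.
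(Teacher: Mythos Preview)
Your proposal is correct and follows essentially the same route as the paper: introduce $y'\in\gamma_y$ with $y'_d=x_d$, use the explicit parametrization of $\gamma_x$ to see that $f$ rescales the horizontal coordinates by $\rho(a)/\rho(x_d)=\delta'/\sqrt{1-x_d^2}$, invoke $x\notin A\cup(-A)$ to bound this factor by $1$, then apply Lemma~\ref{lemma_distance_geodesics} and $f(y')=f(y)$. The only cosmetic difference is that you write the scaling factor as $\rho(a)/\rho(x_d)$ where the paper writes $\delta'/\sqrt{1-x_d^2}$.
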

\begin{proof}
	Let $y'\in S^d$ such that its last coordinate is $y'_d=x_d$. From the parametrization for $\gamma_x$, we see $f(x)=\gamma_x(t_1)$, where $\cos{t_1}=a$ and $\sin{t_1}=\sqrt{1-a^2}=\delta'$, the radius of $\delta A$, hence
	$$f(x)=\left(\frac{\delta'}{\sqrt{1-x_d^2}}x_0,\dots,\frac{\delta'}{\sqrt{1-x_d^2}}x_{d-1},a\right).$$
	A similar expression holds for $f(y')$, with $y'_d=x_d$, so we get
	\begin{align*}
	\norm{f(x)-f(y')}&=\sqrt{\sum_{i=0}^{d-1}\frac{\delta'^2}{1-x_d^2}(x_i-y_i')^2}\\
	&=\frac{\delta'}{\sqrt{1-x_d^2}}\sqrt{\sum_{i=0}^{d-1}(x_i-y'_i)^2}\\
	&\leq \frac{\delta'}{\sqrt{1-x_d^2}} \norm{x-y'}
	\end{align*}
	
	Moreover, since $x\not\in A\cup (-A)$, $|x_d|<a$, so $\frac{\delta'}{\sqrt{1-x_d^2}}<1$, so $\norm{f(x)-f(y')}\leq \norm{x-y'}$. Finally, if we let $y'$ be the one defined in Lemma \ref{lemma_distance_geodesics}, $f(y')=f(y)$, and therefore $\norm{f(x)-f(y)}=\norm{f(x)-f(y')}\leq \norm{x-y'}\leq 2\delta$. 
	
\end{proof}

\begin{lemma}\label{lemma_color_d+1}
	Let $0<\epsilon<1$, such that $$ r=\frac{8\sqrt{\epsilon}}{\sqrt{3(4-\lambda_{d-1}^2)}}<1,$$ $x\in S^d$, and $A=\B{x}{r}$. Let $H$ be the induced subgraph of $\Bor{\epsilon}$ by the vertex set $S^d\setminus A$. Then $\chi(H)\leq d+1$.
\end{lemma}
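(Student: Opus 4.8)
The plan is to produce the $(d+1)$-coloring of $H$ by pulling back, through the map $f$, a $(d+1)$-coloring of the boundary sphere $\partial A$. By rotating $S^{d}$ we may take $x=N=(0,\dots,0,1)$, so $A=\B{N}{r}$ and, by Lemma~\ref{lemma_area_spherical_cap}, $\partial A$ is the $(d-1)$-sphere of radius $r'=r\sqrt{1-r^{2}/4}=\sqrt{1-a^{2}}$ centred at $(0,\dots,0,a)$, where $a=1-r^{2}/2$; note that $r<1$ forces $(r')^{2}=r^{2}(1-r^{2}/4)\ge\tfrac34 r^{2}=\tfrac{16\epsilon}{4-\lambda_{d-1}^{2}}$. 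Applying Lemma~\ref{lemma_color_d+2} in dimension $d-1$ and rescaling the coloring it produces from the unit $(d-1)$-sphere onto $\partial A$, I obtain $c'\colon\partial A\to\{1,\dots,d+1\}$ such that any two points of $\partial A$ at Euclidean distance greater than $r'\lambda_{d-1}$ get distinct colors (for $d=1$ this is merely a choice of two colors for the two points of $\partial A$). Then I colour the vertex set $S^{d}\setminus A$ of $H$ by $c(-N):=1$ and $c(y):=c'(f(y))$ for $y\ne-N$ (legitimate, since $y\notin A$ forces $y\ne N$, so $f(y)$ is defined). The whole task is to verify that $c$ is proper.

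So take an edge $yz$ of $H$, i.e.\ $y,z\in S^{d}\setminus A$ with $\norm{y-z}>2-\epsilon$. First, $-N$ is isolated in $H$: if $y=-N$, Lemma~\ref{lemma_min_distance} gives $\norm{z-N}=\norm{z-(-N)}<2\sqrt{\epsilon-\epsilon^{2}/4}<2\sqrt{\epsilon}<r$ (the last inequality since $8>2\sqrt{3(4-\lambda_{d-1}^{2})}$), so $z\in A$, a contradiction; hence we may assume $y,z\ne-N$. Writing $\hat w=(w_{0},\dots,w_{d-1})$ for $w\in S^{d}$ (so $|\hat w|=\sqrt{1-w_{d}^{2}}$), the parametrization of $\gamma_{w}$ gives $f(w)=\bigl(\tfrac{r'}{|\hat w|}\hat w,\ a\bigr)$ exactly as in the proof of Lemma~\ref{lemma_distance_F}, and since the first $d$ coordinates of $f(w)$ have norm $r'$,
\[
\norm{f(y)-f(z)}^{2}=2(r')^{2}(1-\cos\theta),\qquad \cos\theta=\frac{\langle\hat y,\hat z\rangle}{|\hat y|\,|\hat z|}.
\]
From $\norm{y-z}>2-\epsilon$ together with $\norm{y+z}^{2}+\norm{y-z}^{2}=4$ we get $|\hat y+\hat z|^{2}\le\norm{y+z}^{2}<4(\epsilon-\epsilon^{2}/4)$, and expanding $|\hat y+\hat z|^{2}=|\hat y|^{2}+|\hat z|^{2}+2|\hat y||\hat z|\cos\theta$ and using $|\hat y|^{2}+|\hat z|^{2}\ge2|\hat y||\hat z|$ gives $\cos\theta<-1+\dfrac{2(\epsilon-\epsilon^{2}/4)}{|\hat y||\hat z|}$, hence
\[
\norm{f(y)-f(z)}^{2}>4(r')^{2}-\frac{4(r')^{2}(\epsilon-\epsilon^{2}/4)}{|\hat y|\,|\hat z|}.
\]
Thus it suffices to prove $|\hat y||\hat z|\ge\dfrac{4(\epsilon-\epsilon^{2}/4)}{4-\lambda_{d-1}^{2}}$; granting this, $\norm{f(y)-f(z)}^{2}>(r')^{2}\lambda_{d-1}^{2}$, so $\norm{f(y)-f(z)}>r'\lambda_{d-1}$ and $c(y)=c'(f(y))\ne c'(f(z))=c(z)$.

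If neither $y$ nor $z$ lies in the antipodal cap $-A=\B{-N}{r}$, then $|y_{d}|,|z_{d}|\le a$, so $|\hat y|,|\hat z|\ge\sqrt{1-a^{2}}=r'$ and $|\hat y||\hat z|\ge(r')^{2}\ge\tfrac{16\epsilon}{4-\lambda_{d-1}^{2}}$, comfortably more than required. \textbf{The one genuinely delicate case is when an endpoint lies in $-A$}, say $y\in-A$, for there the radial stretch $r'/|\hat y|$ of $f$ at $y$ exceeds $1$ and $|\hat y|$ may be below $r'$. The remedy is a little spherical bookkeeping. First, $z\notin-A$: otherwise $y_{d},z_{d}\le-a$ and $\norm{y+z}<2\sqrt{\epsilon-\epsilon^{2}/4}$ would force $\sqrt{\epsilon-\epsilon^{2}/4}>a=1-r^{2}/2>\tfrac12$, impossible since $r<1$ entails $\epsilon<\tfrac{3}{16}$; so $|\hat z|\ge r'$. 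Second, from $z\notin A$ and $\norm{z-N}\le\norm{z+y}+\norm{y+N}$ we get $\norm{y+N}>r-2\sqrt{\epsilon-\epsilon^{2}/4}>0$, so $y$ sits in the thin shell just inside $\partial(-A)$ rather than near $-N$, and $\norm{y-N}\ge 2-r$. Then $|\hat y|^{2}=(1+y_{d})(1-y_{d})=\tfrac14\norm{y+N}^{2}\norm{y-N}^{2}\ge\tfrac14\bigl(r-2\sqrt{\epsilon-\epsilon^{2}/4}\bigr)^{2}(2-r)^{2}$, and comparing with $(r')^{2}=\tfrac14 r^{2}(2-r)(2+r)$ and using $r<1$ gives $|\hat y|\ge c_{\lambda_{d-1}}r'$ with $c_{\lambda_{d-1}}>0$ explicit; because $\lambda_{d-1}$ stays bounded away from $0$ for $d\ge2$ (and $d=1$, with $\partial A$ two points, has already been dealt with), the factor $8$ in the hypothesis on $r$ — precisely what makes $(r')^{2}$ a whole factor $4$ larger than the bare minimum used a few lines above — leaves enough room that $|\hat y||\hat z|\ge c_{\lambda_{d-1}}(r')^{2}$ still satisfies the needed inequality. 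I expect this antipodal-cap case to be the only fiddly point; if preferred, it can be reorganized around $f$ via Lemmas~\ref{lemma_distance_geodesics} and \ref{lemma_distance_F}, everything else being forced.
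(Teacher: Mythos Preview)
Your setup — rotate to $x=N$, colour $\partial A$ via Lemma~\ref{lemma_color_d+2} in one dimension lower, pull back by $f$, treat $-N$ separately — is exactly the paper's. The divergence is in how you verify the colouring is proper.

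The paper avoids your case split entirely with one observation: since not both of $-y,-z$ can lie in $A$ (else $y,z\in -A$ and $\norm{y-z}\le 2r<2-\epsilon$), we may assume $-y\notin A$; combined with $y\notin A$ this gives $-y\notin A\cup(-A)$. Now apply Lemma~\ref{lemma_distance_F} to the pair $(-y,z)$, using $\norm{(-y)-z}=\norm{y+z}<2\sqrt{\epsilon-\epsilon^{2}/4}$, to obtain
\[
\norm{f(y)+f(z)}=\norm{f(-y)-f(z)}\le 4\sqrt{\epsilon-\epsilon^{2}/4}<4\sqrt{\epsilon}=\tfrac{\sqrt{3}}{2}\sqrt{4-\lambda_{d-1}^{2}}\,r\le\sqrt{4-\lambda_{d-1}^{2}}\,r',
\]
and then $\norm{f(y)-f(z)}^{2}=4(r')^{2}-\norm{f(y)+f(z)}^{2}>\lambda_{d-1}^{2}(r')^{2}$. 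That is the whole argument: the antipodal symmetry $f(-y)=-f(y)$ converts ``$y$ and $z$ nearly antipodal'' directly into ``$f(y)$ and $f(z)$ nearly antipodal on $\partial A$'', and Lemma~\ref{lemma_distance_F} only ever needs \emph{one} of its inputs outside $A\cup(-A)$.

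Your direct computation of $\norm{f(y)-f(z)}$ is sound when neither endpoint is in $-A$, but your treatment of the case $y\in -A$ is left as a sketch (``leaves enough room''). It can in fact be completed: carrying your own estimates through, one needs $(1-\mu)\ge\tfrac14$ where $\mu=\tfrac{1}{4}\sqrt{3(4-\lambda_{d-1}^{2})}$, i.e.\ $\lambda_{d-1}\ge 1$, and this holds because the projected facet contains simplex vertices at mutual distance $\sqrt{2(d+1)/d}>\sqrt{2}$. But as written this step is not justified, and the alternative you yourself propose in the last sentence — reorganising via Lemmas~\ref{lemma_distance_geodesics} and~\ref{lemma_distance_F} — is precisely the paper's argument and removes the difficulty at no cost.
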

\begin{proof}
	Without loss of generality let $x=N$ the north pole, so $A=\B{N}{r}$. Lemma \ref{lemma_area_spherical_cap} says $\partial A$ is a $S^{d-1}$ sphere of radius $r'=r\sqrt{1-\frac{r^2}{4}}\geq\frac{\sqrt{3}}{2}r$. Thus adapting Lemma \ref{lemma_color_d+2}, we can color it in such a way that every two points with the same color are at a distance of at most $\lambda_{d-1}r'$. We then color $H$ by giving each point $y\in S^d\setminus A\setminus\{-N\}$ the color of $f(y)$, and giving the south pole $-N$ any color. We proceed to prove this is a proper coloring of $H$. \\
	
	From Lemma \ref{lemma_min_distance}, the neighbors of the south pole lie in $\B{N}{\sqrt{\epsilon-\epsilon^2/4}}\subset A$, so $-N$ is isolated in $H$. Let $y,z\in S^d\setminus A\setminus\{-N\}$ such that $\norm{y-z}>2-\epsilon$. Lemma \ref{lemma_min_distance} implies $\norm{y+z}<\delta:=2\sqrt{\epsilon-\frac{\epsilon^2}{4}}$. If we had $(-y),(-z)\in A$, that would mean $y,z\in -A$, but then $\norm{y-z}\leq r\leq 2-\epsilon$ for small $\epsilon$. So we may assume $(-y)\not\in A$, and since $y\not\in A$, $(-y)\not\in -A$. Thus $(-y)\not\in A\cup (-A)$ and $\norm{-y-z}\leq\delta$, thus Lemma \ref{lemma_distance_F} implies 
	$$\norm{f(-y)-f(z)}\leq 2\delta=4\sqrt{\epsilon-\frac{\epsilon^2}{4}}<4\sqrt{\epsilon}=\sqrt{4-\lambda_{d-1}^2}\frac{\sqrt{3}}{2}r\leq \sqrt{4-\lambda_{d-1}^2}r'$$
	From the definition of $f$, it is clear that $f(-y)=-f(y)$, thus Lemma \ref{lemma_min_distance} implies $\norm{f(y)-f(z)}>\lambda_{d-1}r'$, and so $f(y)$ and $f(z)$ have different colors, meaning $y$ and $z$ have different colors as well. Therefore $\chi(H)\leq d+1$. 
		
	\end{proof}

As an immediate application, if a random Borsuk graph leaves some spherical cap in $S^d$ of radius bigger than $r$ with no vertices, then it can be colored with $d+1$ colors. We will show that this is indeed the case when $\epsilon\to 0$ at the said rate. We now include some theorems about Poisson Point Processes and Poisson distributions. For their proofs and a complete discussion refer to \cite{Penrose2003} or \cite{Kingman1993}. 

\begin{theorem}[Poissonization]\label{thm_poissonization}
	Let $X_1, X_2, \dots$, be uniform random variables on $S^d$. Let $M\sim \text{Pois}(\lambda)$ and let $\eta$ be the random counting measure associated to the point process $P_\lambda$= $\{X_1,X_2,\dots,X_M\}$. Then $P_\lambda$ is a Poisson Point Process and for a Borel $A\subset S^d$, $\eta(A)\sim \pois{\lambda\frac{\area{A}}{\area{S^d}}}$.
\end{theorem}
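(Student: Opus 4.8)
The plan is to verify directly that $\eta$ satisfies the two defining properties of a Poisson point process with intensity measure $\mu(\cdot)=\lambda\,\area{\cdot}/\area{S^d}$: that $\eta(A)\sim\pois{\mu(A)}$ for every Borel set $A$, and that $\eta(A_1),\dots,\eta(A_r)$ are independent whenever the Borel sets $A_1,\dots,A_r$ are pairwise disjoint. Because $\mu$ is a finite measure (of total mass $\lambda$), there are no local-finiteness subtleties, and these two properties characterize the process, so establishing them suffices.

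First I would treat a single Borel set $A$. Put $p=\area{A}/\area{S^d}$ and condition on $\{M=m\}$; since the $X_i$ are i.i.d.\ uniform, $\eta(A)=\sum_{i=1}^{m}\mathbf{1}[X_i\in A]$ is $\mathrm{Bin}(m,p)$. Hence
\[
\P{\eta(A)=k}=\sum_{m\ge k}e^{-\lambda}\frac{\lambda^m}{m!}\binom{m}{k}p^k(1-p)^{m-k},
\]
and reindexing $m=k+j$ factors this as $e^{-\lambda}\frac{(\lambda p)^k}{k!}\sum_{j\ge 0}\frac{(\lambda(1-p))^j}{j!}=e^{-\lambda p}\frac{(\lambda p)^k}{k!}$, i.e.\ $\eta(A)\sim\pois{\lambda p}$, which is exactly the marginal claim in the statement.

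For independence I would take pairwise disjoint Borel sets $A_1,\dots,A_r$, adjoin the complement $A_0=S^d\setminus\bigcup_{i=1}^{r}A_i$, and set $p_i=\area{A_i}/\area{S^d}$, so that $\sum_{i=0}^{r}p_i=1$. Conditioning on $\{M=m\}$, the vector $(\eta(A_0),\dots,\eta(A_r))$ is multinomial with parameters $(m;p_0,\dots,p_r)$, so for nonnegative integers $k_0,\dots,k_r$ with $s=\sum_i k_i$,
\[
\P{\eta(A_0)=k_0,\dots,\eta(A_r)=k_r}=e^{-\lambda}\frac{\lambda^s}{s!}\cdot\frac{s!}{k_0!\cdots k_r!}\prod_{i=0}^{r}p_i^{k_i}=\prod_{i=0}^{r}e^{-\lambda p_i}\frac{(\lambda p_i)^{k_i}}{k_i!},
\]
using $\lambda=\sum_i\lambda p_i$. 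The joint law therefore factors into a product of $\pois{\lambda p_i}$ laws, giving at once the independence of the counts and their Poisson marginals; discarding the $A_0$-coordinate yields the claim for $A_1,\dots,A_r$, and this completes the verification that $P_\lambda$ is a Poisson point process with intensity $\mu$.

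I do not expect a genuine obstacle here: this is a routine Poissonization computation. The only points needing a little care are keeping $\lambda$ finite so that $\mu$ is a finite measure and the ``Poisson marginals plus independence over disjoint sets'' characterization applies cleanly, and remembering to include the complementary piece $A_0$ in the independence argument, since otherwise the $p_i$ need not sum to $1$ and the clean factorization breaks down.
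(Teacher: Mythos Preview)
Your argument is correct: the binomial/multinomial thinning computation you carry out is exactly the standard verification that a mixed-Poisson sample of i.i.d.\ uniform points is a Poisson point process with intensity $\lambda\,\area{\cdot}/\area{S^d}$, and both steps (single-set marginal via binomial thinning, independence via the multinomial factorization after adjoining the complement $A_0$) are cleanly executed.

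There is nothing to compare against, however: the paper does not prove this theorem. It is stated as a standard fact about Poisson point processes, with the reader referred to Penrose's and Kingman's books for proofs. So your write-up supplies a proof where the paper deliberately omits one; the approach you take is precisely the classical one found in those references.
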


\begin{lemma}\label{lemma_poisson}
	For $n\geq 0$, $\P{\pois{2n}<n}\leq e^{-0.306n}.$
\end{lemma}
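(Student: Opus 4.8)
The plan is to prove this by a standard Chernoff (exponential Markov) bound on the lower tail of a Poisson random variable. Write $Y\sim\pois{2n}$ and fix a parameter $t>0$. Since $t>0$, the event $\{Y<n\}$ is contained in $\{e^{-tY}\geq e^{-tn}\}$, so Markov's inequality gives
$$\P{Y<n}\leq e^{tn}\,\mathbb{E}\!\left[e^{-tY}\right].$$
I would then recall the moment generating function of a Poisson variable, $\mathbb{E}\!\left[e^{sY}\right]=e^{2n(e^{s}-1)}$, and evaluate it at $s=-t$ to obtain
$$\P{Y<n}\leq \exp\!\Big(n\big(t+2e^{-t}-2\big)\Big).$$

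It then remains to choose $t$ to make the exponent as small as possible. Setting $g(t)=t+2e^{-t}-2$, we have $g'(t)=1-2e^{-t}$, which vanishes at $t=\ln 2$; since $g''>0$, this is the global minimum on $(0,\infty)$. Substituting, $g(\ln 2)=\ln 2-1=-0.30685\ldots$, which is at most $-0.306$. Hence $\P{Y<n}\leq e^{-0.306\,n}$, as claimed. The degenerate case $n=0$ is trivial, since $\pois{0}$ is identically zero and the probability is $0\leq e^{0}$.

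I do not expect any real obstacle here: this is a routine large-deviations estimate, and in fact by Cram\'er's theorem the exponent $\ln 2-1$ is the exact exponential rate for $\P{\pois{2n}\le n}$, so $0.306$ is essentially the best constant one could write in this form. The only points needing a moment of care are orienting Markov's inequality correctly for a \emph{lower} tail --- which is why one exponentiates against $-tY$ rather than $tY$ --- and checking the elementary inequality $\ln 2-1\le -0.306$. The argument makes no use of integrality of $n$, so it covers all real $n\ge 0$ as stated.
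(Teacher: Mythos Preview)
Your proof is correct: this is the standard Chernoff/Cram\'er lower-tail bound for a Poisson variable, and the optimization at $t=\ln 2$ gives exactly the constant $\ln 2-1\approx -0.30685$. The paper itself does not supply a proof of this lemma but simply cites standard references (Penrose, Kingman), so your argument is precisely the routine one those references contain.
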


We are now ready to prove the Theorem \ref{thm_upper_bound}.

\begin{proof}[Proof of Theorem \ref{thm_upper_bound}]
	Let $X_1, X_2, \dots$, be uniform random variables on $S^d$. Let $M\sim\pois{2n}$. Let $\eta$ be the random counting measure of the Poisson Point Process $\left\{X_1,\dots,X_M\right\}$. Similarly, let $\eta_1^n$ be the counting measure of the Random points $\left\{X_1,\dots, X_n\right\}$.\\
	
    Let $$\delta=\frac{16\sqrt{\epsilon}}{\sqrt{3(4-\lambda_{d-1}^2)}}=A_d\sqrt{\epsilon},$$ where $A_d$ is a constant which only depends on $d$. Let $\mathcal{B}=\left\{y_1,\dots,y_N\right\}$ be the $\delta$-net given by Lemma \ref{lemma-nets}, so $$N \geq \frac{3}{d\delta^d}=\frac{B_d}{\epsilon^{d/2}},$$ and $B_d$ is constant. Let $F_i=\B{y_i}{\delta/2}$ for $i=1,\dots,N$ be spherical caps centered at the $\delta$-net. Thus, as in the proof of \ref{lemma-nets}, the $F_i$'s are disjoint.
	Lemma \ref{lemma_area_spherical_cap} gives $$\frac{\area{F_i}}{\area{S^d}}\leq\frac{d}{3}\left(\frac{\delta}{2}\right)^d=D_d\epsilon^{d/2},$$
where $D_d$ is constant.
	
	Note that these spherical caps have the same radius required by Lemma \ref{lemma_color_d+1}, so if we prove that a.a.s.\ one of these $F_i$'s doesn't contain any vertices of the random Borsuk graph, then it must be contained in $S^d\setminus F_i$, and the Lemma \ref{lemma_color_d+1} gives a proper $(d+1)$ coloring. This is what we do.\\
	
	Note that
	\begin{equation}
		\P{\min_{1\leq i\leq N}\eta(F_i)=0} \leq \P{\min_{1\leq i\leq N}\eta_1^n(F_i)=0}+\P{M<n}.
		\label{eqn_prob_unif_vs_poisson}
	\end{equation} 
	
We have
	\begin{align*}
	\P{\min_{1\leq i\leq N}\eta(F_i)=0}&=1-\P{\bigwedge_{i=1}^N\eta(F_i)>0}=1-\prod_{i=1}^N\P{\eta(F_i)>0}\\&=1-\left(1-\P{\pois{2n\frac{\area{F_1}}{\alpha_d}}=0}\right)^N\\
	&\geq 1-\exp\left(-\exp\left(-2n\frac{\area{F_1}}{\alpha_d}\right)N\right)\\
	&\geq 1-\exp\left(-\exp(-2nD_d\epsilon^{d/2})\frac{B_d}{\epsilon^{d/2}}\right)\\
	&=1-\exp\left(-\frac{B_d}{C^{d/2}\log{n}}n^{1-2D_dC^{d/2}}\right).\\
	\end{align*}

    This last expression tends to 1 as $n\to\infty$, since $C$ is such that $1-2D_dC^{d/2}>0$.\\

Lemma \ref{lemma_poisson} assures that $$\P{M<n}=\P{\pois{2n}<n}\to 0$$ as $n\to \infty,$ and therefore (\ref{eqn_prob_unif_vs_poisson}) gives $\displaystyle\P{\min_{1\leq i\leq N}\eta_1^n(F_i)=0}\to 1$, as desired.\\
\end{proof}

\section{Further Questions}

\begin{enumerate}
    
\item It might be possible to find sharper constants in Theorems \ref{thm_lower_bound} and \ref{thm_upper_bound}. For $d=1$, it is certainly possible. The following can be achieved with similar methods to the ones used throughout this paper, so we include the statement without proof.

\begin{theorem}
Let $\epsilon=C \left(\log n / n \right)^2$.
\begin{enumerate}
    \item If $C\geq 9\pi^2 /4$, then a.a.s. $\chi\left(\Borone{\epsilon}{n}\right)=3$.
    \item If $C<\pi^2/4$, then a.a.s. $\chi \left( \Borone{\epsilon}{n} \right) \le 2$.
    \end{enumerate}
\end{theorem}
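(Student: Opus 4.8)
The plan is to prove the two parts separately; both rest on a translation of adjacency into an angular condition and on the precise asymptotics of the largest gap among $n$ uniform points of $S^1$. For the geometry: by Lemma~\ref{lemma_min_distance}, two points $u,v\in S^1$ with angular distance $\psi$ satisfy $\norm{u-v}>2-\epsilon$ iff $\psi>\pi-\theta^*$, where $\theta^*:=2\arcsin\sqrt{\epsilon-\epsilon^2/4}$; since $\epsilon(n)\to0$ we have $\theta^*=(2+o(1))\sqrt{\epsilon}=(2+o(1))\sqrt{C}\,\frac{\log n}{n}$, so the neighbours of a vertex of $\Borone{\epsilon(n)}{n}$ are exactly the vertices lying in the open arc of angular length $2\theta^*$ about its antipode. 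For the probability: it is classical that the largest gap between cyclically consecutive points of $n$ i.i.d.\ uniform points of $S^1$ is a.a.s.\ $(2\pi+o(1))\frac{\log n}{n}$; I would only reprove the two one-sided statements actually used, namely that for every fixed $\eta>0$, a.a.s.\ every arc of length $(2\pi+\eta)\frac{\log n}{n}$ meets the vertex set, and a.a.s.\ some arc of length $(2\pi-\eta)\frac{\log n}{n}$ does not --- the first by a union bound over a sufficiently fine family of arcs, the second by a first/second moment computation over a family of disjoint arcs (whose ``empty'' indicators are negatively correlated in the multinomial, so $\operatorname{Var}\le\mathbb{E}\to\infty$ gives a nonempty one a.a.s.). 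Finally $\lambda_1=\sqrt3$, so $\epsilon(n)<2-\lambda_1$ for $n$ large and Lemma~\ref{lemma_color_d+2} gives $\chi\le 3$ throughout; hence part~(2) reduces to proving bipartiteness and part~(1) to exhibiting an odd cycle.

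For part~(2), assume $C<\pi^2/4$, i.e.\ $4\sqrt{C}<2\pi$. Then $2\theta^*=(4+o(1))\sqrt{C}\,\frac{\log n}{n}<(2\pi-\eta)\frac{\log n}{n}$ for a suitable fixed $\eta>0$ and all large $n$, so a.a.s.\ there is a vertex-free arc of length at least $2\theta^*$ and all $n$ vertices lie in a closed arc $J$ of angular length at most $2\pi-2\theta^*$. Cut $J$ into two sub-arcs $J_1,J_2$ of angular length at most $\pi-\theta^*$ each: any two vertices inside the same $J_i$ are at angular distance at most $\pi-\theta^*$ and hence non-adjacent, so colouring the vertices of $J_1$ with colour $1$ and those of $J_2$ with colour $2$ is proper. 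Thus $\chi\bigl(\Borone{\epsilon(n)}{n}\bigr)\le 2$ a.a.s.

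For part~(1) it suffices that $C>\pi^2$ (and $9\pi^2/4>\pi^2$), so that $2\sqrt{C}>2\pi$. Pick $\eta>0$ with $g:=(2\pi+\eta)\frac{\log n}{n}<\theta^*$ for all large $n$; a.a.s.\ every arc of length $g$ meets the vertex set. Working on that event, let $m$ be the least odd integer with $m>\pi/(\theta^*-g)$ (finite since $g<\theta^*$, and $m\sim \pi n/\bigl((2\sqrt C-2\pi)\log n\bigr)\to\infty$), set $\phi_i:=i(\pi+\pi/m)\bmod 2\pi$ for $i=0,\dots,m$ --- these are $m$ equally spaced points of $S^1$ with $\phi_m=\phi_0$ since $m$ is odd --- and pick a vertex $v_i$ within arc-distance $g/2$ of $\phi_i$, taking $v_m:=v_0$. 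Consecutive targets differ by $\pi+\pi/m$, so consecutive $v_i$ and $v_{i+1}$ are at angular distance within $g$ of $\pi-\pi/m$, hence greater than $\pi-\theta^*$ by the choice of $m$, so $v_i\sim v_{i+1}$; and $v_i\neq v_{i+1}$ since $\phi_i,\phi_{i+1}$ are about $\pi$ apart. Hence $v_0v_1\cdots v_{m-1}v_0$ is a loop-free closed walk of odd length $m$, so it contains an odd cycle, giving $\chi\ge3$; with $\chi\le3$ this yields $\chi\bigl(\Borone{\epsilon(n)}{n}\bigr)=3$ a.a.s. (If one wants the $v_i$ to be visibly distinct and to exhibit a genuine short cycle, impose also $m<2\pi/g$; requiring $\pi/(\theta^*-g)<2\pi/g$ then forces $g<\tfrac{2}{3}\theta^*$ and recovers precisely the stated threshold $C\ge 9\pi^2/4$.)

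The two-colouring of part~(2), the walk construction of part~(1), and the reduction via Lemma~\ref{lemma_color_d+2} are all routine once the two basic facts above are in hand. The step that must be done with care --- and which is the sole source of the constants $\pi^2/4$ and $9\pi^2/4$ (or $\pi^2$) --- is establishing the sharp multiplicative constant $2\pi$ in the largest-gap asymptotics on both sides and matching it against $2\theta^*\sim 4\sqrt{\epsilon}$; one must also watch the $o(1)$'s near the threshold, where the closed-walk form of part~(1) has slack to spare while the genuine-cycle form is only just tight.
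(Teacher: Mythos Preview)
The paper does not actually prove this theorem; it only states that it ``can be achieved with similar methods to the ones used throughout this paper''. Your proposal is correct, and for part~(2) it is precisely the $d=1$ specialization of the paper's upper-bound scheme: an empty arc of angular length $2\theta^*$ plays the role of the empty spherical cap in Theorem~\ref{thm_upper_bound}, bisecting the complementary arc into two pieces of length at most $\pi-\theta^*$ is exactly Lemma~\ref{lemma_color_d+1} for $d=1$, and the classical largest-gap asymptotic $G_n\sim 2\pi\,\frac{\log n}{n}$ replaces the Poissonization argument. The condition $4\sqrt C<2\pi$ you obtain recovers $C<\pi^2/4$ on the nose.

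For part~(1) you depart from the paper's template. Following Theorem~\ref{thm_lower_bound} literally would mean extending a putative $2$-colouring to a closed $2$-cover of $S^1$ (via nearest vertex, or via a $\delta$-net as in the paper) and invoking LSB to force an antipodal pair in one colour class, hence an edge between the two nearby same-coloured vertices. You instead build an explicit odd closed walk through target angles $\phi_i=i(\pi+\pi/m)$, which is more elementary --- LSB is never invoked --- and equally sharp: both arguments reduce to ``maximal gap $<\theta^*$'' and hence yield the threshold $C>\pi^2$, strictly better than the stated $9\pi^2/4$. Your parenthetical observation that insisting on a \emph{simple} odd cycle with pairwise distinct $v_i$ is what produces the constant $9\pi^2/4$ is a nice piece of bookkeeping, though not needed, since an odd closed walk already forces non-bipartiteness.
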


\item We wonder whether there exist functions $\epsilon=\epsilon(n)$ such that the chromatic number of the random Borsuk graph $\Bord{\epsilon}{n}$ a.a.s.\ equals $i$, for $1 \le i \le d+1$. 

\item We only studied here the case that $d$ is fixed and $\epsilon$ is either fixed or tends to zero at some rate. It also seems interesting to let $d \to \infty$ at some rate, or to let $d$ be fixed and $\epsilon \to 2$. See, for example, Raigorodskii's work on coloring high-dimensional spheres \cite{Raigorodskii2012}. 
\end{enumerate}

\noindent We thank our anonymous referees for careful reading and helpful comments.

\bibliographystyle{plain}
\bibliography{References}

\end{document}